\theoremstyle{plain}
\newtheorem{theorem}{Theorem}
\newtheorem{proposition}{Proposition}
\theoremstyle{definition}
\newcommand{\func}[1]{\operatorname{#1}}
\journal{Journal of Computational Physics}
\begin{document}

\begin{frontmatter}

\title{Rational Krylov methods for functions of matrices with applications
    to fractional partial differential equations\tnoteref{funding}}
\tnotetext[funding]{The authors are members of the INdAM Research group GNCS, and this work was supported in part by the
    INDAM-GNCS 2018 projects ``Tecniche innovative per problemi di
    algebra lineare'' and ``Risoluzione numerica di equazioni di
    evoluzione integrali e differenziali con memoria'', by the University of Pisa (Grant PRA\_2017\_05), and FRA-University of Trieste.
    D. Bertaccini gratefully acknowledges the MIUR Excellence Department Project
    awarded to the Department of Mathematics, University of Rome Tor
    Vergata, CUP E83C18000100006 and the Tor Vergata University \lq\lq
    MISSION: SUSTAINABILITY\rq\rq  project \lq\lq NUMnoSIDS\rq\rq, CUP
    E86C18000530005.}

\author[mymainaddress1]{L. Aceto}
\address[mymainaddress1]{
    Universit\`{a} di Pisa,
    Dipartimento di Matematica,
    Via F. Buonarroti, 1/C, Pisa, Italy}
\ead[url]{lidia.aceto@unipi.it}

\author[mymainaddress2,mymainaddress2bis]{D. Bertaccini\corref{mycorrespondingauthor}}
\address[mymainaddress2]{Universit\`{a} di Roma ``Tor Vergata'',
    Dipartimento di Matematica, viale della Ricerca Scientifica 1, Roma,
    Italy. Email: bertaccini@mat.uniroma2.it}
\address[mymainaddress2bis]{Istituto per le Applicazioni del Calcolo (IAC)
    \lq\lq M. Picone\rq\rq, National Research Council (CNR), Roma,
    Italy.} \ead[url]{bertaccini@mat.uniroma2.it}

\author[mymainaddress3]{F. Durastante}
\address[mymainaddress3]{Universit\`{a} di Pisa,
    Dipartimento di Informatica, largo Bruno Pontecorvo 3, Pisa, Italy.} \ead[url]{fabio.durastante@di.unipi.it}

\author[mymainaddress4]{P. Novati}
\address[mymainaddress4]{Universit\`{a} di Trieste,
    Department of Mathematics and Geosciences,
    via Valerio 12/1, 34127 Trieste, Italy}
\cortext[mycorrespondingauthor]{Corresponding author}

\ead[url]{novati@units.it}

\begin{abstract}
In this paper we propose a new choice of poles to define reliable
rational Krylov methods. These methods are used for approximating
function of positive definite matrices. In particular, the
fractional power and the fractional resolvent are considered because
of their importance in the numerical solution of fractional partial
differential equations. The numerical experiments on some fractional
partial differential equation models confirm that the proposed
approach is promising.
\end{abstract}

\begin{keyword}
Fractional Laplacian \sep Matrix functions \sep Krylov methods \sep Gauss-Jacobi rule
\MSC[2010] 65f60 \sep 35r11 \sep 65d32
\end{keyword}

\end{frontmatter}


\section{Introduction}
\label{sec:introduction}
This paper deals with some
computational issues concerning the use of Krylov-type methods for
computing certain functions of matrices occurring in the solution of
fractional partial differential equations.
In particular, we focus
on the following model problems:
\begin{equation}\label{eq:steady_state}
\left\lbrace \begin{array}{ll}
(-\Delta)^{\frac{\alpha}{2}} u = s(\mathbf{x},u), &  \mathbf{x} \in \Omega,\\
u(\mathbf{x}) = 0, & \mathbf{x} \in \partial \Omega,
\end{array}\right.
\end{equation}
and
\begin{equation}\label{eq:time_dependent}
\left\lbrace \begin{array}{ll}
\frac{\partial u}{\partial t} = - \mu(-\Delta)^{\frac{\alpha}{2}} u + s(\mathbf{x},u,t), & (\mathbf{x},t) \in \Omega \times (t_0,T],\\
u(\mathbf{x},t_0) = u_0(\mathbf{x}), & \mathbf{x} \in \Omega, \\
u(\mathbf{x},t) = 0 \quad \mbox{(or} \quad \partial_n u(\mathbf{x},t) = 0), & (\mathbf{x},t) \in \partial \Omega \times (t_0,T],
\end{array}\right.
\end{equation}
where $\alpha \in (1,2]$, $\mu>0,$ and  $-(-\Delta )^{\alpha /2}$ is the
fractional Laplacian operator with homogeneous Dirichlet (or Neumann)
boundary conditions. As usual, this operator can be defined using the
spectral decomposition of the Laplacian, that is,
\begin{equation*}
(-\Delta )^{\alpha /2}u=\sum_{p=1}^{\infty }\mu _{p}^{\alpha /2}c_{p}\varphi
_{p},\quad u=\sum_{p=1}^{\infty }c_{p}\varphi _{p},
\end{equation*}
where $\{\varphi _{p}\}_{p=1}^{\infty }$ are the eigenfunctions of $(-\Delta
)$ and $\{\mu _{p}\}_{p=1}^{\infty }$ are the corresponding positive real
eigenvalues. In this view, any positive definite matrix representing an appropriate discretization $%
A_{n}\in \mathbb{R}^{n\times n}$ of the Laplacian operator (that is, $%
\mathbf{x}^{T}A_{n}\mathbf{x}>0$ for any nonzero $\mathbf{x}\in \mathbb{C}^{n}$) can be used to approximate $%
(-\Delta )^{\alpha /2}$ by means of $A_{n}^{\alpha /2}.$ This approach
is known as \emph{Matrix Transfer
Technique} (MTT)~\cite{MR2970400,MR3739307,ilic2005numerical,ilic2006numerical,yang2011novel,yang2014finite}.
Here we use finite differences, finite volumes and finite elements methods
to generate the underlying approximation~$A_{n}$.

For what concerns the steady state problem \eqref{eq:steady_state},
the solution can be
approximated by any method able to compute the action of $A_{n}^{-\alpha /2}.$
On the other hand, the solution of \eqref{eq:time_dependent} requires also the
discretization in time. Since $A_{n}$ inherits the stiffness of the
Laplacian operator, the time integration of
\eqref{eq:time_dependent} needs to be implicit. Therefore, at each
time step, we need to compute the action of matrix functions of the type $%
\left( I+\nu A_{n}^{\alpha /2}\right) ^{-1},$
where $I$ denotes the  identity matrix of size $n$ and $\nu >0$ is a
parameter that depends on the stepsize and the integrator. Assuming
that the stiffness is only due to the Laplacian operator, here we
consider IMEX-type methods, which allow an explicit treatment of the
forcing term in \eqref{eq:time_dependent}.

In this work, we analyze some Krylov-type methods approximating the
action of the product of the above matrix functions by a given
vector. We compare the behavior of the polynomial, shift-and-invert,
extended, and rational Krylov methods in terms of accuracy and
computational cost on various test problems and different
discretizations. Our aim is to show that our rational Krylov method
can outperform the other approaches provided that the poles of the
underlying rational forms are suitably defined. To this purpose, we
define the poles using the Gauss-Jacobi approach for the computation
of $A_{n}^{-\alpha /2}$ as
in~\cite{2018arXiv180710086A,aceto2017efficient,aceto2017rational},
and then extend this idea for the computation of $\left( I+\nu
A_{n}^{\alpha /2}\right) ^{-1} \mathbf{v}$ giving also some details
for a reliable implementation. We prove that our new poles are real
and simple, so that they can be computed with a root-finder less
prone to error amplification. We remark that the computation of
$\left( I+\nu A_{n}^{\alpha /2}\right) ^{-1}\mathbf{v}$ with Krylov
methods has been considered in~\cite{MR3854059},
but by using only the shift-and-invert approach. \\

The paper is organized as follows. In
Section~\ref{sec:Rational_Krylov_Methods} we recall the basic
features of Krylov methods of polynomial and rational types tailored
for approximating function of matrices, while in
Section~\ref{sec:Optimal_Poles_Selection} we give a theoretical and
computational analysis of the poles of the proposed
rational methods. In
Section~\ref{sec:Numerical_Solution_of_the_Model_Problems} we
discuss some issues of the MTT based on finite differences, finite
elements, and finite volumes approximation applied to the Laplacian
operator. Finally, Section~\ref{sec:Numerical_Experiments} is
devoted to some numerical experiments in which we compare the
performances of our approach with the other Krylov methods presented
in Section~\ref{sec:Rational_Krylov_Methods}.

\section{Rational Krylov Methods}

\label{sec:Rational_Krylov_Methods}
We recall that for a given matrix $A_{n}\in \mathbb{R}^{n\times n}$ and a function $f$ that is analytic on and inside a closed contour $\Gamma $
that encloses the spectrum of $A_{n}$, the matrix function $f(A_{n})$ is defined
as
\begin{equation*}
f(A_{n})=\frac{1}{2\pi i}\int_{\Gamma }f(z)(zI -A_{n})^{-1}\,d\,z.
\end{equation*}

Let $V_{k}$ be an orthogonal matrix whose columns $\mathbf{v}_{1},\dots, \mathbf{v}_{k}$ span an arbitrary Krylov subspace $\mathcal{W}_{k}(A_{n},%
\mathbf{v})$ of dimension $k$. We obtain an approximation of $f(A_{n})%
\mathbf{v}$ by
\begin{equation}
\begin{split}
f(A_{n})\mathbf{v}=& \frac{1}{2\pi i}\int_{\Gamma }f(z)(zI -A_{n})^{-1}%
\mathbf{v}\,d\,z \\
\approx & \frac{1}{2\pi i}\int_{\Gamma
}f(z)V_{k}(zI -V_{k}^{T}A_{n}V_{k})^{-1}V_{k}^{T}\mathbf{v}\,d\,z \\
=& V_{k}f(V_{k}^{T}A_{n}V_{k})V_{k}^{T}\mathbf{v}.
\end{split}
\label{eq:generic_krylov_approximation}
\end{equation}%
Different methods for the approximation of matrix functions are obtained for
different choices of the projection spaces $\mathcal{W}_{k}(A_{n},\mathbf{v})
$. Given a set of scalars $\left\{ \sigma _{1},\dots,\sigma _{k-1}\right\}
\subset \overline{\mathbb{C}}$ (the extended complex plane), that are not
eigenvalues of $A_{n}$, let
\begin{equation*}
q_{k-1}(z)=\prod\nolimits_{j=1}^{k-1}(\sigma _{j}-z).
\end{equation*}
The rational Krylov subspace of order $k$ associated with $A_{n}$, $\mathbf{v%
}$ and $q_{k-1}$ is defined by%
\begin{equation*}
\mathcal{Q}_{k}(A_{n},\mathbf{v})=\left[ q_{k-1}(A_{n})\right] ^{-1}\mathcal{%
K}_{k}(A_{n},\mathbf{v}),
\end{equation*}
where%
\begin{equation*}
\mathcal{K}_{k}(A_{n},\mathbf{v})=\func{Span}\{\mathbf{v},A_{n}\mathbf{v}%
,\ldots ,A_{n}^{k-1}\mathbf{v}\}
\end{equation*}
is the standard polynomial Krylov space. By defining the matrices%
\begin{equation*}
C_{j}=\left( \mu _{j}\sigma _{j}A_{n}-I\right) (\sigma _{j}I-A_{n})^{-1},
\end{equation*}
where $\left\{ \mu _{1},\dots, \mu _{k-1}\right\} \subset \overline{\mathbb{C}}$
are such that $\sigma _{j}\neq $ $\mu _{j}^{-2}$, it is known that the
rational Krylov space can also be written as follows (see~\cite%
{guttel2013rational})%
\begin{equation*}
\mathcal{Q}_{k}(A_{n},\mathbf{v})=\func{Span}\{\mathbf{v},C_{1}\mathbf{v}%
,\ldots ,C_{k-1}\cdots C_2 C_{1}\mathbf{v}\}.
\end{equation*}
This general formulation allows to recast most of the classical Krylov
methods in terms of a rational Krylov method with a specific choice of $%
\sigma _{j}$ and $\mu _{j}$. In particular, the standard (polynomial) Krylov
method in which $\mathcal{W}_{k}(A_{n},\mathbf{v})=\mathcal{K}_{k}(A_{n},%
\mathbf{v})$ can be recovered by defining $\mu _{j}=1$ and $\sigma
_{j}=\infty $ for each $j$. The extended Krylov method (see~\cite%
{EKSM1,EKSM2}), in which
\begin{equation*}
\mathcal{W}_{2k-1}(A_{n},\mathbf{v})=\func{Span}\{\mathbf{v}, A_{n}^{-1}%
\mathbf{v}, A_{n}\mathbf{v}, \ldots, A_{n}^{-(k-1)}\mathbf{v}, A_{n}^{k-1}%
\mathbf{v}\},
\end{equation*}
is obtained by setting
\begin{equation*}
(\mu _j,\sigma_j)= \left\lbrace \begin{array}{ll}
(1,\infty), & \mbox{for  $j$ even},\\
(0,0),  & \mbox{for  $j$  odd.}
\end{array}\right.
\end{equation*}
The shift-and-invert (single pole) rational Krylov, see~\cite{moret2004rd,van2006preconditioning}, where%
\begin{equation*}
\mathcal{W}_{k}(A_{n},\mathbf{v})=\func{Span}\{\mathbf{v},(\sigma
I-A_{n})^{-1}\mathbf{v},\ldots ,(\sigma I-A_{n})^{-(k-1)}\mathbf{v}\},
\end{equation*}
is defined by taking $\mu _{j}=0$ and $\sigma _{j}=\sigma $ for each $j$.

As for the use of Krylov methods for the computation of functions of
operators involving $(-\Delta )^{\alpha /2}$, that is for our cases
of interest, we quote here~\cite{yang2011novel} and~\cite{MR3854059}
in which the standard Krylov method and the shift-and-invert
approach are investigated, respectively. On the same line,  the
computation of the matrix square root ($\alpha = 1$) is done by
using  the extended Krylov method in~\cite{MR1616584}, 
see also \cite{MR3069090}, and the rational Lanczos
approximation in~\cite{MR2522957}.

In this work we consider a rational Krylov approach in which $\mu _{j}=0$
for each $j$, and where $-\sigma _{j} = \xi _{j}>0$\ are suitably defined
(see Section~\ref{sec:Optimal_Poles_Selection}). In this view, the method
here presented is a rational Krylov method in which%
\begin{equation} \label{eq:rational_kryl_subs}
\mathcal{W}_{k}(A_{n},\mathbf{v})=\func{Span}\{\mathbf{v},(\xi
_{1}I+A_{n})^{-1}\mathbf{v},\ldots ,(\xi _{k-1}I+A_{n})^{-1}\cdots(\xi
_{1}I+A_{n})^{-1}\mathbf{v}\}.
\end{equation}
Starting from $\mathbf{v}_{1}=\mathbf{v/}\beta $, where $\beta =\Vert
\mathbf{v}\Vert _{2}$, we determine $\mathbf{v}_{j+1}$ by orthogonalizing
the vector%
\begin{equation} \label{lsk}
\mathbf{w}_{j}=(\xi _{j}I+A_{n})^{-1}\mathbf{v}_{j}
\end{equation}
against $\mathbf{v}_{1},\dots, \mathbf{v}_{j}$, followed by normalization.
In this way, a sequence of vectors $\{\mathbf{v}_{j}\}_{j=1}^{k}$ is generated
such that
\begin{align*}
\mathbf{v}_{j}  &=  (\xi_j I + A_n) \sum_{i=1}^{j+1}h_{i,j} \mathbf{v}_{i}, \qquad   \text{  for } j\leq k-1, \\
\mathbf{v}_{k}  &=   (\xi_k I + A_n) \sum_{i=1}^{k} h_{i,k} \mathbf{v}_{i} + (\xi_k I + A_n)  h_{k+1,k} \mathbf{v}_{k+1}
\end{align*}
obtaining the following Arnoldi--like decomposition:
\begin{equation*}
V_k^T A_n V_k = (I -H_k D_k ) H_k^{-1}    - h_{k+1,k} V_k^T   A_n  \mathbf{v}_{k+1}  \mathbf{e}_{k}^{T}  H_k^{-1},
\end{equation*}
where $D_{k}=\func{diag}(\{\xi _{j}^{-1}\}_{j=1}^{k})$ and $H_{k}$ is the
Hessemberg matrix $H_{k}=[h_{i,j}]$. Since
\begin{equation*}
h_{k+1,k} V_k^T   A_n   \mathbf{v}_{k+1}  \mathbf{e}_{k}^{T}  H_k^{-1} =    h_{k+1,k} V_k^T  V_{k+1} H_{k+1} \mathbf{e}_{k+1}    \mathbf{e}_{k}^{T}  H_k^{-1}  =O_k,
\end{equation*}
we find the following expression
for the projected matrix
\begin{equation*}
V_k^T A_n V_k = (I -H_k D_k ) H_k^{-1}.
\end{equation*}
Finally, we approximate $f(A_{n})\mathbf{v}$ as
\begin{equation*}
f(A_{n})\mathbf{v}\approx \beta V_{k}f(V_{k}^{T}A_{n}V_{k})\mathbf{e}_{1},
\end{equation*}
where $\mathbf{e}_{1}=(1,0,\dots,0)^{T}\in \mathbb{R}^{k}.$

As is well known, rational Krylov methods are generally quite fast (in terms of
iterations with respect to the polynomial counterpart) whenever $A_{n}$
represents an unbounded self-adjoint operator. In this view, the most
computational demanding part of these methods is typically due to the
solution of the linear systems~\eqref{lsk}. The computational cost of an
iterative solver for~\eqref{lsk}, whenever $A_{n}$ is large, is highly
problem-dependent and often influences very much the performance of these
algorithms and therefore their competitiveness with respect to polynomial
methods. Anyway, in the examples considered here, we always deal with sparse
and, in particular, banded matrices $A_{n}$. In this sense, we compute the
approximate solution of~\eqref{lsk} by means of sparse direct solvers.
Alternatively, one may consider iterative solvers with preconditioners able
to handle multiple shifts (see e.g.~\cite%
{benzi2003approximate,bertaccini2004efficient,bellavia2011nonsymmetric,bertaccini2016interpolating}%
).

In Section~\ref{sec:Optimal_Poles_Selection} we propose to select
the poles $\xi _{j}$ by exploiting some insights on the functions to
be approximated here, i.e. $f(z)=z^{-\alpha /2}$ and $f(z)=(1+\nu
z^{\alpha /2})^{-1}$, by working with suitable rational
approximations. Clearly, one could look for an automatic selection
of the poles that avoids the use of such information. An example of
application of this strategy is represented by the \texttt{RKFIT}
algorithm~\cite{MR3702872}. It can be used for computing a rational
approximation for a matrix function $f(A_n)$ by solving a rational
least square problem of the form
\begin{equation*}
\text{given }f(A_n),A_n,\mathbf{v},\text{ find
}R_{k+\ell,k}(\cdot)\text{ to minimize } \Vert
f(A_n)\mathbf{v}-R_{k+\ell,k}(A_n)\mathbf{v}\Vert _{2}^{2},
\end{equation*}
where $f$ is a matrix function (or an approximation), and
$R_{k+\ell,k}$ is a rational function of type $(k+\ell,k)$ and $\ell
\geq -k$. Thus, the minimum is taken on the roots of the polynomials
defining $R_{k+\ell,k}$. To effectively use this procedure, one
needs to find some starting poles for $R_{k+\ell,k}(z)$ and the
values of $k$ and $\ell.$

\section{Poles Selection}
\label{sec:Optimal_Poles_Selection}

Our proposal for selecting the poles for the construction of the rational Krylov subspace~\eqref{eq:rational_kryl_subs} relies on the rational
approximation of $z^{-\alpha/2}$ proposed in~\cite{aceto2017efficient,aceto2017rational,2018arXiv180710086A}. In particular, following \cite[eq. (5)]{aceto2017efficient}, we get
\begin{equation*}
z^{-\alpha/2} \approx \sum_{j=1}^{k} \frac{2\sin(\frac{\alpha}{2}\pi) \tau^{1-\alpha/2}}{\pi} \frac{\omega_j}{1+\theta_j} \left(\frac{\tau(1-\theta_j)}{1+\theta_j} +z\right)^{-1}  \triangleq R_{k-1,k}\left( z \right),
\end{equation*}
where $\omega_j$ and $\theta_j$ are, respectively, the weights and
nodes of the Gauss--Jacobi quadrature formula with weight function
$(1-x)^{-\frac{\alpha}{2}} (1+x)^{\frac{\alpha}{2}-1}$ and $\tau$ is
a positive real parameter  that should be defined suitably.  In
addition, denoting by $\zeta_r$ the $r$th zero of the Jacobi
polynomial
${\mathcal{P}}_{k-1}^{({\alpha}/{2},1-{\alpha}/{2})}\left(z\right)$
and setting
\begin{eqnarray}
\epsilon_r &=& \tau \frac{1- \zeta_r}{1+\zeta_r}, \quad r=1,2,\dots,k-1, \label{eq:epsrad} \\
\eta_j &=& \frac{\tau(1-\theta_j)}{1+\theta_j}, \quad j=1,2,\dots,k,  \label{eq:etarad}
\end{eqnarray}
from~\cite[Proposition~1]{aceto2017efficient} we can express $R_{k-1,k}(z)$ as the rational function
\begin{equation*}
R_{k-1,k}(z) = \frac{p_{k-1}(z)}{q_k(z)} =\frac{\chi \prod_{r=1}^{k-1}(z+\epsilon_r)}{\prod_{j=1}^{k} (z+\eta_j)},
\end{equation*}
where
\begin{equation*}
 \quad \chi =
\frac{\eta_k}{\tau^{\alpha/2}}
\frac{\binom{k+\nicefrac{\alpha}{2}-1}{k-1}}{\binom{k-\nicefrac{\alpha}{2}}{k}}
\prod_{j=1}^{k-1} \frac{\eta_j}{\epsilon_j}.
\end{equation*}

It is worth noting that $\{\theta_j\}_{j=1}^k$ are the zeros of the Jacobi polynomial
${\mathcal{P}}_{k}^{(-{\alpha}/{2},{\alpha}/{2}-1)}\left(z\right).$ In this context, this implies that  the values $\eta_j$
are all real and simple and therefore the roots of $q_k(z)$ as well.

Given a positive definite matrix $A_n$ with spectrum $\sigma(A_n)
\subseteq [\lambda_{\min}, \lambda_{\max}],$ a specific rational
approximation for the matrix function $f(A_n)=A_n^{-\alpha/2}$ can
be deduced from the above, provided that the parameter $\tau$ is
selected. To this purpose, setting
\begin{equation*}
{\tilde \tau}_k   \triangleq \lambda_{\text{min}} \left( \frac{{\alpha}/{2}}{2
    k e}  \right)^2 \exp\left(2 W \left( \frac{4k^2
    e}{({\alpha}/{2})^2}\right)\right)
\end{equation*}
with $W(\cdot)$ denoting the Lambert-W function, in~\cite[Propositions~3.4, 4.1]{2018arXiv180710086A} it has been shown that
 for the matrix function $A_n^{-\alpha/2}$ a reliable value of $\tau$ is given by
\begin{equation*}
\tau = \tau_k := \left\lbrace\begin{array}{ll}
  {\tilde \tau}_k , & \text{ if } k \le \overline{k},\\
\displaystyle \left( {\tilde \sigma}_k + \sqrt{{\tilde \sigma}_k^2 + (\lambda_{\text{min}}\lambda_{\text{max}})^{\nicefrac{1}{2}}} \right)^2, & \text{ if } k> \overline{k},\\
\end{array}\right.
\end{equation*}
where
\begin{equation*}
{\tilde \sigma}_k  \triangleq  - \frac{\nicefrac{\alpha}{2}}{8 k}  \ln\left(\frac{\lambda_{\text{max}}}{\lambda_{\text{min}}}\right)   \lambda_{\text{max}}^{\nicefrac{1}{2}},
\end{equation*}
and
\begin{equation*}
\overline{k} = \frac{(\nicefrac{\alpha}{2})^2}{8}  \left(\frac{\lambda_{\text{max}}}{\lambda_{\text{min}}}\right)^{\nicefrac{1}{2}} \left[  \ln\left(\frac{\lambda_{\text{max}}}{\lambda_{\text{min}}}\right)  +2\right].
\end{equation*}
This choice is the result of an analysis on the Pad\'{e}-type approximations of $z^{-\alpha/2}.$
Now, by using the above arguments, we can get an approximation also for $f(z)=(1+\nu z^{\nicefrac{\alpha}{2}})^{-1}.$  Indeed, we can write
\begin{equation*}
\begin{split}
(1+\nu z^{\nicefrac{\alpha}{2}})^{-1} =&\frac{1}{1+\nu (z^{-{\nicefrac{\alpha}{2}}})^{-1}} \approx  \frac{1}{1+\nu (R_{k-1,k}(z))^{-1}}  =  \frac{1}{1+\nu   \frac{q_k(z)}{p_{k-1}(z)}}\\
=& \frac{p_{k-1}(z)}{p_{k-1}(z)+\nu q_k(z)} := \frac{p_{k-1}(z)}{{\tilde q}_k(z)}.
\end{split}
\end{equation*}
For the roots of $\tilde{q}_k(z)$ we observe the following useful
result.
\begin{proposition}\label{prop:realroots}
    All the roots of the polynomial $\tilde{q}_k(z)$ are real and
    simple.
\end{proposition}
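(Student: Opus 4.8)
The plan is to exploit the partial fraction form of $R_{k-1,k}$ displayed above and thereby reduce the location of the roots of $\tilde q_k$ to a one-dimensional monotonicity argument on the real axis. Writing $c_j = \frac{2\sin(\frac{\alpha}{2}\pi) \tau^{1-\alpha/2}}{\pi} \frac{\omega_j}{1+\theta_j}$, the stated expansion reads $R_{k-1,k}(z)=p_{k-1}(z)/q_k(z)=\sum_{j=1}^{k} c_j/(z+\eta_j)$. Since $\tilde q_k = p_{k-1}+\nu q_k$ with $q_k$ monic of degree $k$ and $p_{k-1}$ of degree $k-1$, the polynomial $\tilde q_k$ has degree exactly $k$ with leading coefficient $\nu>0$. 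Moreover, at any point $x$ with $q_k(x)\neq 0$ one has $\tilde q_k(x)=q_k(x)\bigl(R_{k-1,k}(x)+\nu\bigr)$, so the roots of $\tilde q_k$ are precisely the solutions of the scalar equation $R_{k-1,k}(x)=-\nu$.

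First I would record the sign information. For $\alpha\in(1,2)$ we have $\sin(\frac{\alpha}{2}\pi)>0$, the Gauss--Jacobi weights satisfy $\omega_j>0$, and since $\theta_j\in(-1,1)$ we get $1+\theta_j>0$; hence every residue $c_j>0$ and every pole $\eta_j=\tau(1-\theta_j)/(1+\theta_j)>0$. As already recalled, the $\eta_j$ are real and simple, so I may order them as $\eta_1<\cdots<\eta_k$. Then $R_{k-1,k}$ has simple poles at the $k$ distinct points $-\eta_k<\cdots<-\eta_1$, and because $R_{k-1,k}'(x)=-\sum_{j} c_j/(x+\eta_j)^2<0$, the function is strictly decreasing on each interval of its domain.

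The count then follows from the intermediate value theorem, interval by interval. On the leftmost interval $(-\infty,-\eta_k)$ the function decreases from $0^-$ to $-\infty$, so it attains the negative value $-\nu$ exactly once; on each of the $k-1$ bounded gaps $(-\eta_{j+1},-\eta_j)$ it decreases from $+\infty$ to $-\infty$ and hence meets $-\nu$ exactly once; on the rightmost interval $(-\eta_1,+\infty)$ it decreases from $+\infty$ to $0^+$ and stays positive, so $-\nu<0$ is never attained there. This produces exactly $1+(k-1)=k$ real solutions of $R_{k-1,k}(x)=-\nu$, each located where $q_k\neq 0$. Since $\tilde q_k$ has degree $k$, these $k$ distinct real numbers exhaust its roots, and distinctness makes them simple.

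The only genuinely delicate point is the positivity of the residues $c_j$: this is what forces all $k$ roots onto the real line and confines the relevant values to the negative part of the range of $R_{k-1,k}$, after which the argument is a routine sign-and-monotonicity count. The borderline case $\alpha=2$, where $\sin(\frac{\alpha}{2}\pi)=0$ degenerates the expansion, corresponds to $z^{-1}$ and can be handled directly.
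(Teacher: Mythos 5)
Your proof is correct, and it takes a genuinely different route from the paper's. The paper works entirely with the product forms of $p_{k-1}$ and $q_k$: it first shows that their root sets $\{-\epsilon_r\}_{r=1}^{k-1}$ and $\{-\eta_j\}_{j=1}^{k}$ strictly interlace, by applying an interlacing theorem for zeros of Jacobi polynomials with shifted parameters to ${\mathcal{P}}_{k}^{(-\alpha/2,\,\alpha/2-1)}$ and ${\mathcal{P}}_{k-1}^{(\alpha/2,\,1-\alpha/2)}$, and then concludes via a Hermite--Kakeya--Obreschkoff-type theorem: two real polynomials have strictly alternating roots if and only if every real linear combination of them has only real, simple roots; $\tilde q_k=p_{k-1}+\nu q_k$ is such a combination. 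You bypass both cited theorems: positivity of the Gauss--Jacobi weights gives positive residues $c_j$, hence $R_{k-1,k}$ is strictly decreasing between consecutive poles, and your intermediate-value count of the solutions of $R_{k-1,k}(x)=-\nu$ produces exactly $k$ distinct real roots of a degree-$k$ polynomial. The two key facts are classically equivalent --- for a ratio $p_{k-1}/q_k$ with simple real poles and positive leading coefficients, positivity of all residues is the same as strict interlacing of the roots --- but the payoffs differ. Your argument is more self-contained, and it localizes the roots: one in $(-\infty,-\eta_k)$ and one in each gap $(-\eta_{j+1},-\eta_j)$, so in particular all roots are negative, a fact the paper must obtain separately by Descartes' rule of signs. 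The paper's argument is sign-independent: it yields real simple roots of $c_1p_{k-1}+c_2q_k$ for \emph{all} real $c_1,c_2$, whereas your count uses $\nu>0$ and $\sin(\alpha\pi/2)>0$, i.e.\ $\alpha\in(1,2)$. This restriction costs nothing here, since the paper's route is equally confined: at $\alpha=2$ the Jacobi parameter $-\alpha/2=-1$ leaves the admissible range and the whole quadrature construction degenerates, as you note.
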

To prove this we need two auxiliary results. First, there is an interlacing of the zeros of
Jacobi polynomials of different orders and weights.
\begin{theorem} \cite[Theorem 2.3]{INTERLACING} \label{thm:interlacing}
    Let $\beta, \gamma > -1.$ For any $t, s \in [0, 2],$ let
    \begin{itemize}
        \item $-1 < \theta_1 < \theta_2 < \ldots < \theta_n < 1$ be the zeros of  ${\mathcal{P}}^{(\beta,\gamma)}_n,$ and
        \item $-1 < \zeta_1 < \zeta_2 < \ldots < \zeta_{n-1} < 1$ be the zeros of ${\mathcal{P}}^{(\beta+t,\gamma+s)}_{n-1}$.
    \end{itemize}
    Then
    \begin{equation*}
    -1 < \theta_1 < \zeta_1 < \theta_2 < \zeta_2 < \ldots < \theta_{n-1} < \zeta_{n-1} < \theta_n < 1.
    \end{equation*}
\end{theorem}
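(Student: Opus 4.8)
The plan is to prove the interlacing by a Stieltjes-type separation argument, reducing the geometric statement to a purely sign-theoretic one. Write $P_n := {\mathcal{P}}_n^{(\beta,\gamma)}$ with zeros $\theta_1 < \cdots < \theta_n$ and $Q_{n-1} := {\mathcal{P}}_{n-1}^{(\beta+t,\gamma+s)}$ with zeros $\zeta_1 < \cdots < \zeta_{n-1}$. First I would recall the classical fact that, since $\beta,\gamma,\beta+t,\gamma+s > -1$, all of these zeros lie in $(-1,1)$; this already yields the outer bounds $-1 < \theta_1$ and $\theta_n < 1$. It then suffices to show that $Q_{n-1}$ has exactly one zero in each of the $n-1$ open intervals $(\theta_i,\theta_{i+1})$, $i=1,\dots,n-1$. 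Because $Q_{n-1}$ has precisely $n-1$ real zeros and these intervals are pairwise disjoint, it is enough to prove that $Q_{n-1}(\theta_i) \neq 0$ and that $Q_{n-1}(\theta_i)$ changes sign at each step, i.e. $\operatorname{sgn} Q_{n-1}(\theta_i) = (-1)^{n-i}$ for $i=1,\dots,n$. The intermediate value theorem then places a zero of $Q_{n-1}$ in every $(\theta_i,\theta_{i+1})$, and a degree count forces exactly one per interval, which is precisely the asserted chain $\theta_i < \zeta_i < \theta_{i+1}$.

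The second ingredient is a mixed three-term (contiguous) relation linking the two sequences. The goal is an identity of the form $Q_{n-1}(x) = G(x)\,P_n(x) + H(x)\,{\mathcal{P}}_{n-1}^{(\beta,\gamma)}(x)$, in which $H$ is strictly positive on $(-1,1)$ for all $t,s \in [0,2]$. Granting such a relation, evaluation at $x=\theta_i$ annihilates the $P_n$-term and gives $Q_{n-1}(\theta_i) = H(\theta_i)\,{\mathcal{P}}_{n-1}^{(\beta,\gamma)}(\theta_i)$. Here I would invoke the standard orthogonal-polynomial fact that the zeros of ${\mathcal{P}}_{n-1}^{(\beta,\gamma)}$ strictly interlace those of $P_n$, so that ${\mathcal{P}}_{n-1}^{(\beta,\gamma)}(\theta_i)$ is nonzero and, with the usual positive leading coefficient, alternates as $(-1)^{n-i}$. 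Since $H(\theta_i) > 0$, the signs of $Q_{n-1}(\theta_i)$ inherit exactly this alternation, closing the argument begun above.

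What remains, and what I expect to be the main obstacle, is producing the mixed relation with a provably sign-definite coefficient $H$ uniformly over the square $[0,2]^2$ of admissible shifts. For the integer corners --- raising $\beta$ or $\gamma$ by $1$ --- the required identities are classical contiguous relations for Jacobi polynomials, whose coefficients factor through $(1\pm x)$ with transparent signs on $(-1,1)$; the case $t=s=1$ is even more immediate, since ${\mathcal{P}}_{n-1}^{(\beta+1,\gamma+1)}$ is a constant multiple of $\tfrac{d}{dx}P_n$ and Rolle's theorem gives interlacing at once. For non-integer shifts no finite contiguous relation with polynomial coefficients exists, so I would instead pass to a fractional, Euler-type integral representation expressing ${\mathcal{P}}_{n-1}^{(\beta+t,\gamma+s)}$ as an integral transform of ${\mathcal{P}}_{n-1}^{(\beta,\gamma)}$ against a nonnegative kernel, and deduce positivity of the resulting $H$ from positivity of that kernel. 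Alternatively, one can fill in the interior of $[0,2]^2$ by combining the integer-shift interlacing with Markov's monotonicity theorem for the dependence of Jacobi zeros on their parameters, supplemented by a continuity-of-zeros argument verifying that no $\zeta_i$ can reach an endpoint $\theta_i$ or $\theta_{i+1}$ as $(t,s)$ traverses the square. Controlling the sign of $H$ --- equivalently, ruling out boundary collisions --- across the entire two-parameter family is the delicate point, and the restriction $t,s \le 2$ is exactly what keeps the kernel nonnegative and the interlacing intact.
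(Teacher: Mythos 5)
You should first be aware that the paper contains no proof of this statement: it is imported wholesale, with the citation \cite[Theorem 2.3]{INTERLACING} (the Driver--Jordaan--Mbuyi line of results on interlacing of Jacobi zeros across parameter shifts), so your attempt can only be compared with that source's argument --- which, in its working branch, your sketch does approximate. Your reduction is sound: all four parameters exceed $-1$, so both polynomials have all zeros real, simple and in $(-1,1)$; showing $\operatorname{sgn} Q_{n-1}(\theta_i) = (-1)^{n-i}$ plus a degree count does force exactly one $\zeta_i$ into each gap $(\theta_i,\theta_{i+1})$; and your classical inputs are correct, namely $\operatorname{sgn} \mathcal{P}_{n-1}^{(\beta,\gamma)}(\theta_i) = (-1)^{n-i}$, the derivative identity $\frac{d}{dx}\mathcal{P}_n^{(\beta,\gamma)} = c\,\mathcal{P}_{n-1}^{(\beta+1,\gamma+1)}$ with $c>0$ settling $t=s=1$ by Rolle, and the contiguous relations with coefficients factoring through $1\pm x$ for integer corners.

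The genuine gap is in your primary route for non-integer $(t,s)$, and it is not repairable as stated. The Bateman/Euler-type fractional integrals do exist, but they represent $\mathcal{P}_{n-1}^{(\beta+t,\gamma+s)}(x)$ as a positive-kernel average of $\mathcal{P}_{n-1}^{(\beta,\gamma)}(y)$ over an interval of $y$ on which that polynomial changes sign; positivity of the kernel therefore gives no control of the sign of the output at the individual points $\theta_i$, and cannot produce a sign-definite $H$. Indeed the existence of such an $H$ is essentially a restatement of the theorem: division by $P_n$ always yields $Q_{n-1} = G\,P_n + R$ with $\deg R < n$, and asking that $H = R/\mathcal{P}_{n-1}^{(\beta,\gamma)}$ be positive at the $\theta_i$ is exactly the sign-agreement you are trying to prove, so the route is circular absent independent structure --- which, as you note, is unavailable off the integer lattice. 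Your explanation of the constraint $t,s\le 2$ via kernel nonnegativity is also wrong: kernels of the form $(y-x)^{\mu-1}$ stay positive for every $\mu>0$, whereas interlacing genuinely fails for shifts beyond $2$; the threshold comes from the corner identities (quasi-orthogonality), not from the kernel. By contrast, the route you relegate to an ``alternative'' is the actual proof in the cited source, and it is cleaner than you fear: by Markov's theorem (Szeg\H{o}, Theorem 6.21.1) the zeros $\zeta_i(t,s)$ are strictly decreasing in $t$ and strictly increasing in $s$, hence $\zeta_i(2,0) \le \zeta_i(t,s) \le \zeta_i(0,2)$ on all of $[0,2]^2$; once the two corner interlacings $\theta_i < \zeta_i(2,0)$ and $\zeta_i(0,2) < \theta_{i+1}$ are established --- and these require direct shift-by-two mixed recurrences, since interlacing does not compose transitively through two shift-by-one steps --- the strict chain follows immediately, with no continuity or collision analysis needed. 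So to complete your argument you should drop the fractional-kernel branch, prove the two corner cases by explicit mixed three-term relations, and let monotonicity do the rest.
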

\begin{theorem}\cite[Theorem 8]{REALROOT}\label{thm:realroot}
    Let $p$ and $q$ be real polynomials. Then $p$ and $q$ have strictly
    alternating roots if and only if all polynomials in the space
    \begin{equation*}
    \{ c_1 \, p + c_2 \, q \,:\, c_i \in \mathbb{R}, i=1,2 \}
    \end{equation*}
    have real and simple roots.
\end{theorem}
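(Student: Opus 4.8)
The plan is to prove the two implications of the equivalence separately, relying throughout on the classical dictionary between root interlacing and the sign behaviour of the Wronskian $W(x) = p'(x)q(x) - p(x)q'(x)$. First I would record a reduction that streamlines both directions: since $p = 1\cdot p + 0 \cdot q$ and $q = 0\cdot p + 1\cdot q$ already lie in the pencil $\{c_1 p + c_2 q\}$, the hypothesis of the ``if'' direction forces $p$ and $q$ themselves to have only real and simple zeros; and when $p,q$ have strictly alternating roots they are by definition real-rooted and simple. Hence in both directions I may assume $p$ and $q$ are real-rooted with simple zeros, so the entire content is to relate the \emph{alternation pattern} of their zeros to the real-simplicity of every member of the pencil. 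The identity I lean on is $(p/q)' = -W/q^2$, which shows that strict interlacing of the simple real zeros of $p$ and $q$ is equivalent to $W$ having no real zero (equivalently, to $p/q$ being strictly monotone between consecutive poles).

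For the forward implication (alternating roots $\Rightarrow$ every $c_1 p + c_2 q$ real and simple) I would argue through the rational function $R = p/q$. Interlacing means $R$ has a partial-fraction expansion whose residues all carry the same sign, so $R$ is strictly monotone on each interval between consecutive poles and sweeps out all of $\mathbb{R}$ there. For $c_1 \neq 0$ the real zeros of $c_1 p + c_2 q$ are exactly the solutions of $R(x) = -c_2/c_1$, and monotonicity produces exactly one simple solution in each gap; a degree count then shows these exhaust the zeros. The boundary cases $c_1 = 0$ or $c_2 = 0$ are immediate, and the only delicate point is the degree bookkeeping when $\deg p = \deg q$, where one real root escapes to $\pm\infty$ precisely as $-c_2/c_1$ crosses the ratio of leading coefficients.

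For the reverse implication I would proceed by contradiction. If the zeros of $p$ and $q$ fail to strictly alternate, then $R = p/q$ is not monotone on some interval between two consecutive poles, so $R'$, and hence $W$, vanishes at some real point $x_0$ lying strictly between those poles. Using that $p$ and $q$ have simple zeros, $x_0$ is a zero of neither $p$ nor $q$: if $q(x_0)=0$ then $x_0$ is a pole, excluded; and if $p(x_0)=0$ then $W(x_0)=p'(x_0)q(x_0)$ forces $p'(x_0)=0$, contradicting simplicity. At such an $x_0$ the vanishing of $W(x_0)=p(x_0)q'(x_0)-p'(x_0)q(x_0)$ is exactly the statement that the linear system $c_1 p(x_0)+c_2 q(x_0)=0$, $c_1 p'(x_0)+c_2 q'(x_0)=0$ has a nonzero solution $(c_1,c_2)$; for that choice, $x_0$ is a double real zero of $c_1 p + c_2 q$, contradicting the assumed simplicity. (Equivalently one can run a homotopy over $(c_1:c_2)\in\mathbb{RP}^1$: a failure of interlacing forces two real zeros either to collide or to leave the real axis as a conjugate pair, both impossible under the hypothesis.)

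The main obstacle I anticipate is not the conceptual core — the Wronskian/monotonicity correspondence — but the bookkeeping at infinity: matching the number of finite real zeros found in the gaps against the actual degree of $c_1 p + c_2 q$ in the borderline configuration $\deg p = \deg q$, and making precise what ``strictly alternating'' is to mean in the cases $\deg p = \deg q$ versus $\deg p = \deg q \pm 1$. Pinning down these degree-degenerate cases uniformly, so that the escape-to-infinity of a root is accounted for rather than mistaken for a lost root, is where the genuine care is required.
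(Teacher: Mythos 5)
The paper offers no proof for you to be compared against: Theorem~\ref{thm:realroot} is imported verbatim from \cite[Theorem 8]{REALROOT} (it is the classical Hermite--Kakeya--Obreschkoff theorem) and is used as a black box in the proof of Proposition~\ref{prop:realroots}, whose only content is verifying the interlacing hypothesis for the specific pair $p_{k-1},q_k$. So your proposal must be judged on its own merits, and on those merits it is essentially correct: it is the standard proof of HKO. The core equivalences you invoke are right --- strict interlacing of simple zeros is equivalent to the Wronskian $W=p'q-pq'$ never vanishing on $\mathbb{R}$ (equivalently, same-sign residues and strict monotonicity of $R=p/q$ between poles); for $c_1\neq 0$ the zeros of $c_1p+c_2q$ are the solutions of $R(x)=-c_2/c_1$, and the delicate count when $\deg p=\deg q$ works out because the degree of $c_1p+c_2q$ drops by exactly one precisely when $-c_2/c_1$ equals the limit of $R$ at infinity, matching the one root that escapes; and for the converse, the observation that $W(x_0)=0$ is exactly solvability of the system $c_1p(x_0)+c_2q(x_0)=0$, $c_1p'(x_0)+c_2q'(x_0)=0$ with $(c_1,c_2)\neq(0,0)$, yielding a double real root, is the right mechanism.

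Three patches are needed to make it airtight. First, your two formulas for $W$ differ by a sign ($p'q-pq'$ at the start, $pq'-p'q$ in the converse); immaterial to the argument, but fix it. Second, in the converse the failure of alternation is not confined to intervals ``between two consecutive poles'': you must also cover two consecutive zeros of $p$ lying in an unbounded interval beyond the extreme zeros of $q$ (Rolle applied to $R$ between them still produces $W(x_0)=0$ at a non-pole, with $p(x_0)\neq0$ since the zeros are consecutive), and the case of a common zero $x_0$ of $p$ and $q$, where no critical point is needed because $W(x_0)=0$ holds outright and the same determinant trick gives a double root directly. Third, the statement implicitly assumes $p$ and $q$ linearly independent: if $p\propto q$ the pencil contains the zero polynomial, and your double-root conclusion also requires $c_1p+c_2q\not\equiv0$, which is exactly this assumption --- state it once. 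With these additions your proof is complete, and your flagged worry about the bookkeeping at infinity is real but routine, resolved as sketched above.
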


\begin{proof}[Proof of Proposition~\ref{prop:realroots}]
    To apply Theorem~\ref{thm:realroot}, we need to show that the roots
    of the polynomials $p_{k-1}(z)$ and $q_{k}(z)$ are strictly
    interlaced. By construction we know that the roots of $p_{k-1}(z)$
    are the real values $\{-\epsilon_r\}_{r=1}^{k-1}$
    in~\eqref{eq:epsrad}, while the roots of $q_k(z)$ are the real
    values $\{-\eta_j\}_{j=1}^{k}$ in~\eqref{eq:etarad}. By
    Theorem~\ref{thm:interlacing} applied to the Jacobi polynomials
    ${\mathcal{P}}_{k}^{(-{\alpha}/{2},{\alpha}/{2}-1)}\left(z \right)$
    and ${\mathcal{P}}_{k-1}^{({\alpha}/{2} ,1-{\alpha}/{2}
        )}\left(z\right)$ we have
    \begin{equation}\label{eq:interlacing}
    -1 < \theta_1 < \zeta_1 < \theta_2 < \zeta_2 < \ldots < \theta_{k-1} < \zeta_{k-1} < \theta_k < 1.
    \end{equation}
    Then, for the interlacing between the $\{-\epsilon_r\}_{r=1}^{k-1}$
    and the $\{-\eta_j\}_{j=1}^{k}$, we need to prove that   $ -\eta_1 < -\epsilon_1 < -\eta_2 < -\epsilon_2 < \ldots < -\eta_{k-1} <
   -\epsilon_{k-1} < -\eta_k$  or, equivalently,
    \begin{equation*}
    \begin{gathered}
    \eta_1 > \epsilon_1 > \eta_2 > \epsilon_2 > \ldots > \eta_{k-1} >
   \epsilon_{k-1} > \eta_k.
    \end{gathered}
    \end{equation*}
Now, using  \eqref{eq:epsrad} and \eqref{eq:etarad}  for $i=1,\dots,k-1,$  we have
    \begin{equation*}
 \epsilon_i > \eta_{i+1}   \quad \Leftrightarrow  \quad  \frac{\tau  \left(1-\zeta _i\right)}{1+\zeta _i}>\frac{\tau
        \left(1-\theta _{i+1}\right)}{1+\theta _{i+1}}.
    \end{equation*}
Since $\tau>0,$ $(1+\zeta_i)>0$ and $(1+\theta _{i+1} )>0,$   the previous inequalities are satisfied if and only if $\zeta _i< \theta_{i+1},$ which is true by \eqref{eq:interlacing}.
Similar arguments lead to verify that $\eta_i >  \varepsilon_i.$ By using Theorem~\ref{thm:realroot}, the proof is complete.
\end{proof}
In addition, since  all the coefficients of ${\tilde q}_k(z)$ are strictly positive by construction, according to the Descartes' rule of signs, we are also sure that all its roots are negative.

In consideration of all the above arguments, the poles that we use  for the computation of the proposed rational Krylov methods are positive, real and simple.
In fact, when $f(A_n)=A_n^{-\alpha/2},$ in \eqref{eq:rational_kryl_subs} we take as poles the opposite of the roots of $q_k(z)$ (i.e., $\xi_j = \eta_j$), while, when $f(A_n)=(I+\nu A_n^{\nicefrac{\alpha}{2}})^{-1},$ we take as poles the  opposite of the roots of ${\tilde q}_k(z).$  Thus, one of the main features of our proposal is represented by the possibility to work only with real arithmetic.  From now on, we refer to  the rational Krylov methods based on these poles as \emph{Krylov Jacobi}.

Just to provide an example of computed poles, in
Figure~\ref{fig:poles_time_stepping} we focus on the matrix $A_n$
given by the centered differences discretization of the 1D Laplacian
with $n =2^{12}$ and $\alpha=1.2.$ In this case,
for $k=10,20,30,$ we computed  the roots
$\{\theta_j\}_{j=1}^{k}$ and $\{\zeta_r\}_{r=1}^{k-1}$  by the
\texttt{JACOBI\_POLYNOMIAL} Matlab code; see~\cite{jacobi}. On the
other hand, we used the package MPSolve to compute the roots of
$\tilde q_k(z)$; see~\cite{MPSOLVE,MPSOLVE2}.

\begin{figure}[htbp]
    \centering
    \includegraphics[width=\columnwidth]{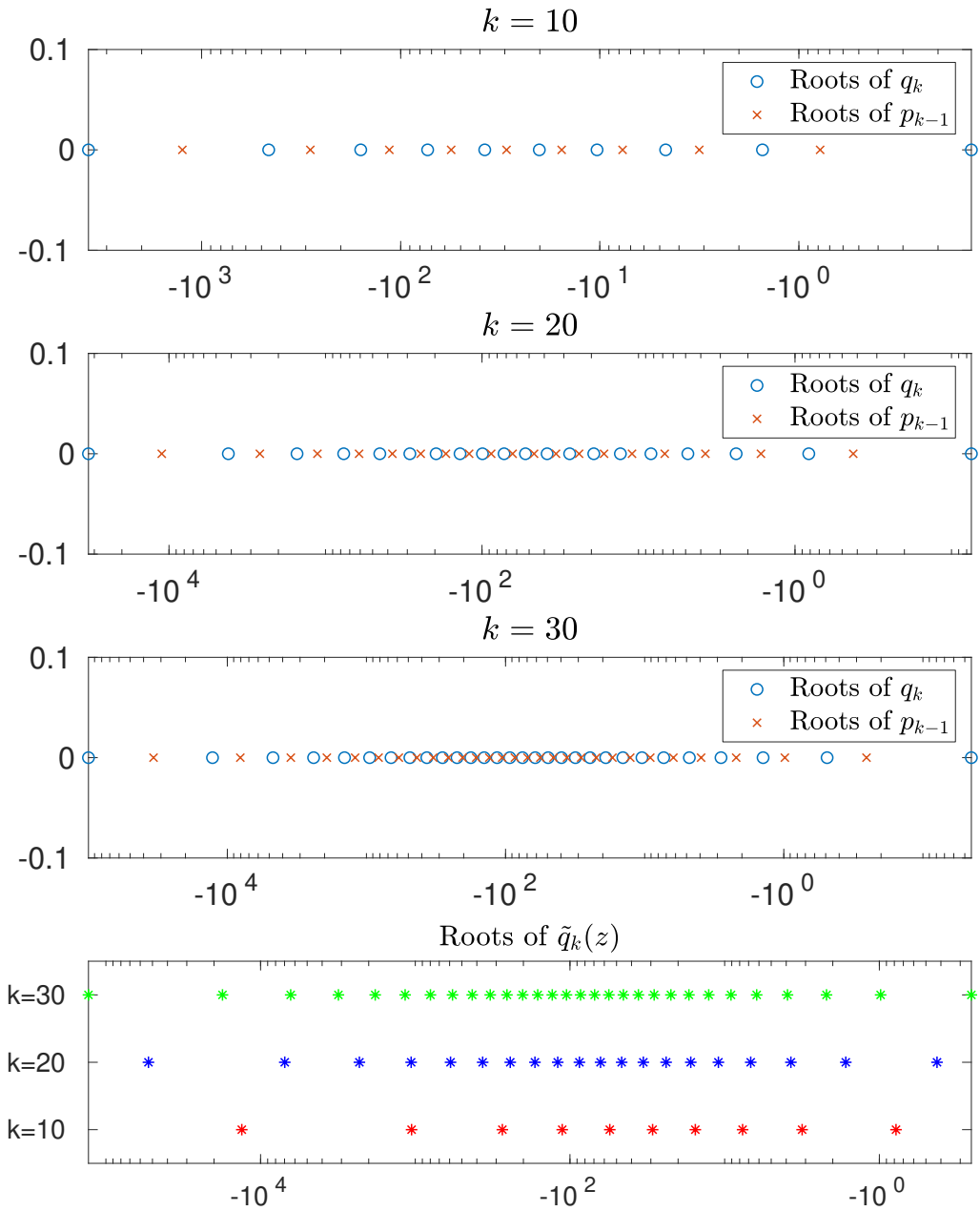}
    \caption{Poles for the computation of the function
    $f(z) = (1+\nu z^{\nicefrac{\alpha}{2}})^{-1}$ for the matrix $A_n$ given by the centered differences
    discretization of the 1D Laplacian with $n =2^{12}$, $\alpha=1.2$, and $\nu = 1/(n+1)$.}
    \label{fig:poles_time_stepping}
\end{figure}

\section{Solving the model problems by MTT using Krylov methods}
\label{sec:Numerical_Solution_of_the_Model_Problems}

Computing efficiently functions of matrix-vector products by Krylov
methods is beneficial for many applications. However, in order to
provide reasonable comparisons, here we consider one class of
mathematical tools which is quite wide and important because it
forms the ground of many numerical models:
(evolutionary) fractional partial differential
equations.

As a case study, let us consider the $d$--dimensional Laplacian
operator
\begin{equation}\label{eq:laplacianoperator}
\Delta = \sum_{i=1}^{d} \frac{\partial^2}{\partial x_i^2},
\end{equation}
on the domain $\Omega$ subject to Dirichlet (or Neumann) boundary conditions.
The \emph{Matrix Transfer Technique} (MTT) introduced in \cite{ilic2005numerical,ilic2006numerical} considers approximations  to the fractional Laplacian of the form
\begin{equation*}
(-\Delta)^{\frac{\alpha}{2}} \approx A_n^{\nicefrac{\alpha}{2}},
\end{equation*}
where $A_n$ is a suitable matrix approximation of the standard Laplacian. In this work, we consider the following discretizations:
\begin{itemize}
\item finite differences (FD);
\item finite elements methods (FEM) with first order Lagrangian elements on a triangular
tessellation;
\item cell--centered finite volume (FV) methods with a piecewise linear basis,~\cite{MR1897941}.
\end{itemize}
For FEM and FV we get $A_n = M_n^{-1}K_n,$ where $K_n$ denotes the
stiffness matrix, while $M_n$ is the mass matrix for the FEM or the
diagonal matrix of the cell volumes for FV.
In these cases, even if $A_n$ is generally not symmetric, it is
similar to a positive definite matrix. In fact,
$M_n^{\frac{1}{2}}A_n M_n^{-\frac{1}{2}}= M_n^{-\frac{1}{2}} K_n M_n^{-\frac{1}{2}}.$
Whenever the mass matrix $M_n$ is not trivial, i.e.
$A_n $ is not a multiple of $K_n,$ we should solve linear systems with
coefficient matrices $(\xi_{j}I + M_n^{-1}K_n),$  by means of a direct method. This can be
inappropriate because it would require assembling the matrix $A_n =
M_n^{-1}K_n$ that can be dense. Therefore, instead of computing the
basis of the rational Krylov space with $A_n,$ we generate it working with
$\hat{A}_n = \xi_j M_n + K_n$. This is completely analogous to what is usually done for generalized
eigenvalues for the pencil $(K_n,M_n)$ with rational Krylov methods,
see, e.g., \cite{RUHE1,RUHE2}.
Then, with the attained basis, we approximate the
matrix--vector product $f(A_n)\mathbf{v}$ as in \eqref{eq:generic_krylov_approximation}.

Finally, we stress that direct solvers for sparse linear systems can
benefit by the use of permutations to reduce fill--in. This can be
mostly useful when unstructured matrices are used for either FEM or
FV discretizations.

\subsection{Integration in time}
\label{sec:time_integrations}

If we consider the solution of the problem~\eqref{eq:time_dependent}
semidiscretized with respect to the space variables, we get the
initial value problem
\begin{equation}
\label{eq:ivp} \left\lbrace\begin{array}{ll}
\mathbf{y}'(t) = - A_n^{\frac{\alpha}{2}} \mathbf{y}(t) + \mathbf{s}(t), & t \in (t_0,T],  \\
\mathbf{y}(t_0) = \mathbf{y}_0,
\end{array}\right.
\end{equation}
where ${\bf y}(t): \mathbb{R} \rightarrow \mathbb{R}^{n}$ and ${\bf
y_0} \in \mathbb{R}^n$ (with respect to problem
\eqref{eq:time_dependent}, to simplify notation, here we set $\mu=1$
and $s(\mathbf{x},u,t)=s(\mathbf{x},t)$). To
integrate~\eqref{eq:ivp} in time, we apply a
\emph{Linear Multistep Method} {(LMM)} over an equispaced grid
$t_{j}= t_0 + j \delta_t, j=0,1,\ldots,n_t,$ with $\delta_t =
(T-t_0)/n_t$ the selected stepsize. Setting
$\mathbf{g}(\mathbf{y},t)= - A_n^{\frac{\alpha}{2}} \mathbf{y}(t) +
\mathbf{s}(t),$ we get
\begin{equation}\label{eq:lmf}
\sum_{j=0}^{\ell}\alpha_{j}\mathbf{y}^{(m+j)} = \delta_t
\sum_{j=0}^{\ell} \beta_j \mathbf{g}^{(m+j)}, \qquad m=0,1,\dots,n_t-\ell,
\end{equation}
with
\begin{equation*}
\mathbf{y}^{(m+j)} \approx \mathbf{y}(t_{m+j}), \quad
\mathbf{g}^{(m+j)} = \mathbf{g}( \mathbf{y}^{(m+j)} ,t_{m+j}).
\end{equation*}
Since the fractional Laplacian operator is unbounded and give
stiffness , implicit schemes are preferable to avoid possible severe
stepsize restrictions in order to satisfy stability requirements;
see, e.g., \cite{Lambert}. Using implicit schemes, we need to solve
linear systems of the form
\begin{equation*}
\left( \alpha_\ell I + \delta_t \beta_{\ell}
A_{n}^{\frac{\alpha}{2}} \right) \mathbf{y}^{(m+\ell)} = \delta_t
\sum_{j=0}^{\ell-1} \beta_{j} \textbf{g}^{(m+j)}
-\sum_{j=0}^{\ell-1} \alpha_j \mathbf{y}^{(m+j)},\, m=0,1,\dots,n_t-\ell,
\end{equation*}
that can be recast in the computation of a function of matrix times vector, that is,
\begin{equation*}
\mathbf{y}^{(m+\ell)} = f(A_n) \tilde{\mathbf{y}}^{(m)} , \qquad m=0,1,\dots,n_t-\ell,
\end{equation*}
where
\begin{equation}\label{eq:time-marching-function}
f(z) = \left( 1 + \delta_t \frac{\beta_{\ell}}{\alpha_\ell }
z^{\frac{\alpha}{2}} \right)^{-1}
\end{equation}
and
\begin{equation*}
\tilde{\mathbf{y}}^{(m)}=  \frac{1}{\alpha_\ell} \left(\delta_t \sum_{j=0}^{\ell-1}
\beta_{j} \textbf{g}^{(m+j)} -\sum_{j=0}^{\ell-1} \alpha_j \mathbf{y}^{(m+j)} \right).
\end{equation*}
By using similar arguments, we can also deal with semi--linear
problems having non--linear forcing term
\begin{equation}
\label{eq:ivp_semilinear} \left\lbrace\begin{array}{ll}
\mathbf{y}'(t) = -\mu A_n^{\frac{\alpha}{2}} \mathbf{y}(t) + \mathbf{s}(t,\mathbf{y}(t)), & t \in (t_0,T],\\
\mathbf{y}(t_0) = \mathbf{y}_0.
\end{array}\right.
\end{equation}
Assuming that the function $\mathbf{s}$ is not
responsible for further stiffness, we can use
\emph{implicit--explicit methods} (IMEX), similarly
to~\cite{aceto2017efficient}. A generic $\ell$-step IMEX method for
\eqref{eq:ivp_semilinear} can be written as
\begin{equation}\label{eq:IMEX}
\sum_{j=0}^{\ell}\alpha_{j}\mathbf{y}^{(m+j)} = -\delta_t \mu
\sum_{j=0}^{\ell} \beta_j   A_n^{\frac{\alpha}{2}}
\mathbf{y}^{(m+j)} + \delta_t \sum_{j=0}^{\ell-1} \gamma_j
\mathbf{s}(t_{m+j},\mathbf{y}^{(m+j)}).
\end{equation}
For further details on IMEX methods see, e.g.,
~\cite{MR1335656,MR592157}. Therefore, we need again to compute
matrix functions  of the form \eqref{eq:time-marching-function}.

\section{Numerical Experiments}
\label{sec:Numerical_Experiments}

The examples in this section are collected in two groups. In
Section~\ref{sec:Stationary_Problems} we deal with the discrete
version of the steady state problem~\eqref{eq:steady_state}, while
in Section~\ref{sec:Time_Dependent_Problem} the time dependent
problem~\eqref{eq:time_dependent}.

All the numerical experiments are performed on a laptop running Linux with 8~Gb memory and CPU
Intel\textsuperscript{\textregistered} Core\texttrademark\,
i7-4710HQ CPU with clock 2.50~GHz. The codes are written and executed
in MATLAB R2018a. The following external codes are used in our
routines:
\begin{itemize}
    \item MPSolve package for the computation of the polynomial zeros; see~\cite{MPSOLVE},
    \item \lstinline[style=Matlab-bw]|rat_krylov| for the construction of the basis of the rational Kyrlov methods; see~\cite{guttel2013rational},
    \item \lstinline[style=Matlab-bw]|EKS| class for the construction of the basis of the extended Krylov method; see~\cite{EKSM2},
    \item FENICS library~\cite{AlnaesBlechta2015a} v.2018.1 to assemble the finite elements matrices.
    \item FiPy library~\cite{FIPY} v.3.1.3-dev2-g11937196 to assemble the finite volume matrices.
\end{itemize}
The auxiliary linear systems are solved here
by Matlab's standard backslash.
Whenever is feasible, the reference solution $\mathbf{u}^*$  for the
various problem is computed directly by the Schur--Parlett algorithm. We denote by $\varepsilon = \|\mathbf{u}^* -
\mathbf{u}\|_2/\|\mathbf{u}^*\|_2$ the relative error.
 All the timings are measured in
seconds averaged on one hundred runs. For the definition of the poles, the interval $[\lambda_{\min},\lambda_{\max}]$
is always assumed explicitly known.

\subsection{Stationary problems}
\label{sec:Stationary_Problems}

We report the results for the steady state
problem~\eqref{eq:steady_state} discretized using second order
centered differences for the 1D and 2D Laplacian on the domains
$[0,1]$ and $[0,1]^2$, respectively. The right--hand side terms for
the two problems are $f(x) = \sin(\pi x)$, and $f(x,y) = \sin(\pi
x)\sin(\pi y)$. We observe the expected behavior: rational Krylov
Jacobi algorithms outperform (in term of
iterations) the polynomial Krylov; see
Figure~\ref{fig:stationary_problem_1D_finite_differences} for the
behavior for the one dimensional problem, and
Figure~\ref{fig:stationary_problem_2D_finite_differences} for the
two dimensional. In both cases the shift parameter $\sigma$ for the
shift--and--invert method is computed as $\sigma =
\sqrt{\lambda_{\min}(A_n)\lambda_{\max}(A_n)}$, cf.
\cite{MR2522957}.
\begin{figure}[htbp]
    \centering
    \includegraphics[width=0.80\columnwidth]{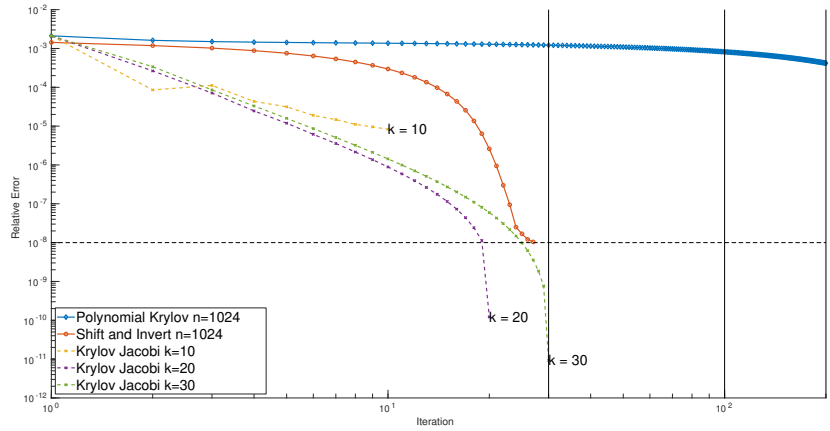}
    \caption{Comparisons of the convergence curve in term of the relative error
    $\varepsilon$ with the reference solution for the 1D stationary problem
    discretized by finite differences for $\alpha = 1.2, k = 10,20,30$. The behavior for other values of $\alpha$ is similar.}\label{fig:stationary_problem_1D_finite_differences}
\end{figure}
\begin{figure}[htbp]
    \centering
    \includegraphics[width=0.80\columnwidth]{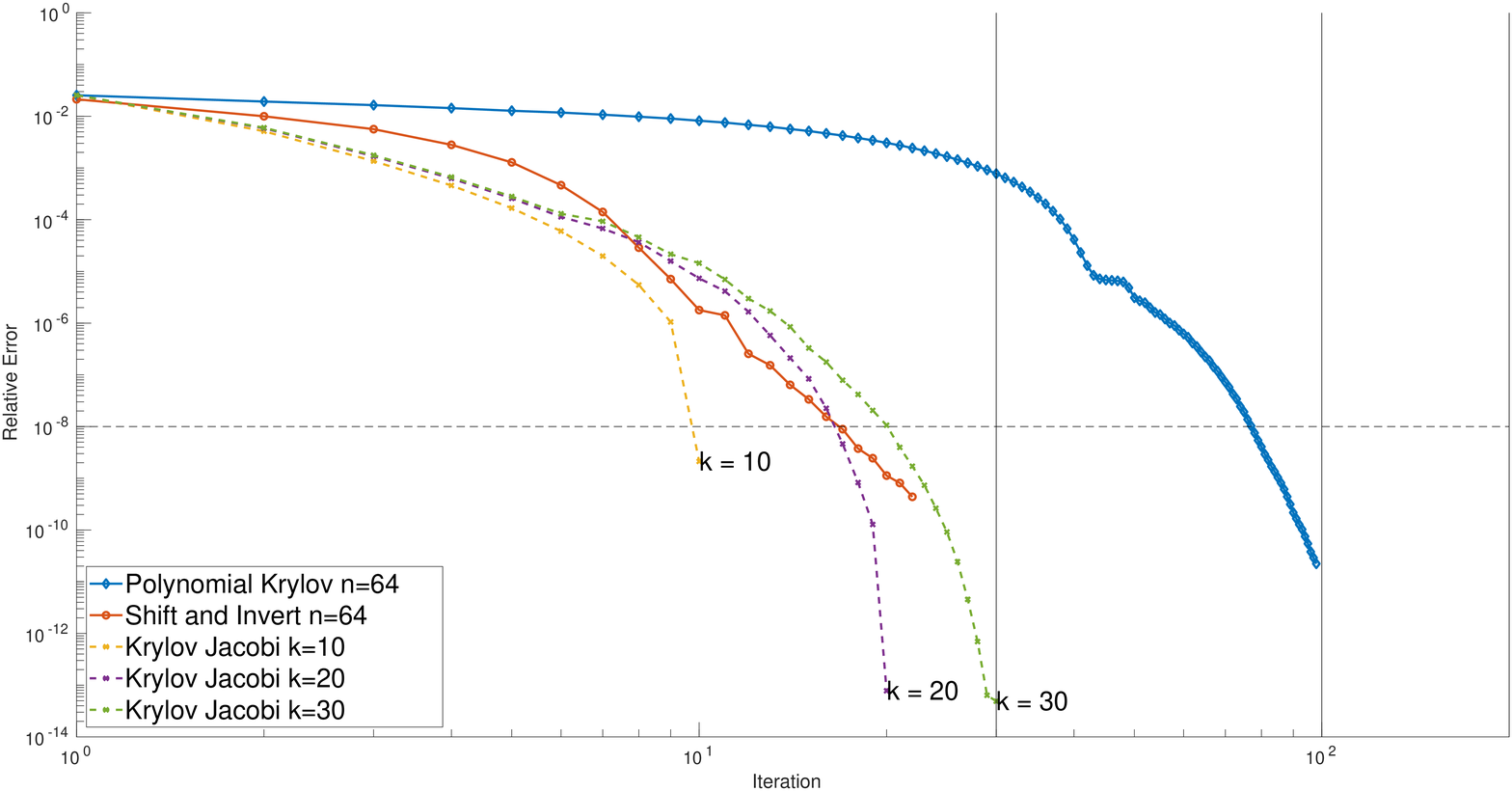}
    \caption{Comparisons of the convergence curve in term of the relative error
    $\varepsilon$ with the reference
    solution for the 2D stationary problems discretized by finite differences for $\alpha = 1.5, k = 10,20,30$. The behavior for other values of $\alpha$ is similar.}
    \label{fig:stationary_problem_2D_finite_differences}
\end{figure}

\subsection{Time--dependent problems}
\label{sec:Time_Dependent_Problem}

In this section, we focus on the problem~\eqref{eq:time_dependent}. First, we set  $s = 0,$  $\mu=1,$ $T=1,$
\begin{equation}\label{eq:f=0_coeff}
u(x,y,0) = x^2 y^2 (1-x)(1-y),
\end{equation}
on the unit square $\Omega = [0,1]^2$ and consider
Dirichlet boundary conditions.  We refer to this
test problem as P1. By using finite differences (FD), finite
volumes (FV) and finite elements methods (FEM) we discretize this
problem and test on it the underlying rational Krylov methods. Then,
we consider the problem~\eqref{eq:time_dependent} with $s$ a nonzero
polynomial function of the solution, producing the \emph{fractional
Allen--Cahn} equation, that is
\begin{equation}\label{eq:allen-cahn}
\left\lbrace\begin{array}{ll}
\displaystyle\frac{\partial u}{\partial t} + \mu(-\Delta)^{\frac{\alpha}{2}} u = - (u^3 - u), & (\mathbf{x},t) \in \Omega\times[t_0,T],\\
u(\mathbf{x},t_0) = u_0(\mathbf{x}), & \mathbf{x} \in \Omega,\\
\partial_n u(\mathbf{x},t) = 0, & {(\mathbf{x},t)} \in \partial \Omega \times (t_0,T].
\end{array}\right.
\end{equation}
Here $\mu > 0$ is a small parameter defining the thickness
of the interface separating the different phases and $u_0 \in
\mathbb{L}^2(\Omega).$
In our numerical experiments we choose $\Omega=[0,1]^2, [t_0,T]=[0,4],$ and
\begin{equation}\label{eq:initial_data_alle_cahn}
u_0(\mathbf{x}) \equiv  u_0(x,y) = 0.25\sin(2\pi x)\sin(2\pi y).
\end{equation}
Refer to this test problem as P2. We apply finite volumes and
finite elements methods to discretize this problem. In both cases
the obtained matrices $A_n$ are positive definite, and thus the
analysis in Section~\ref{sec:Optimal_Poles_Selection} applies
straightforwardly.

\paragraph{P1 by using FD}
We consider the five--point stencil of the FD discretization for the
Laplacian operator on the grid with $n_x = n_y = n_t = 2^6$ points.
The method for marching in time is the \emph{implicit Euler}. Therefore,
as discussed in Section
\ref{sec:Numerical_Solution_of_the_Model_Problems}, in order to
advance in time, we need to compute the matrix function
\begin{equation*}
    f(A_n) = (I + c\,\delta_t A_n^{\alpha/2})^{-1},
\end{equation*}
where $c$ is a constant. In Figure~\ref{fig:f0_FD} the relative
error and execution time comparison for the first iterate of the
method is reported. The poles for the RKFIT algorithm are computed
on the complete matrix function on a reduced size grid ($n_x = n_y =
2^4$ and $n_t = 2^6$) from the initial guess $\{\sigma_j =
+\infty\}_{j=1}^{k}$. Note that the RKFIT can compute only $k = 10$
different nodes. We recall also
that for the extended Krylov algorithm the number $k$
represent the size of the Krylov subspaces, thus the number of
linear auxiliary linear systems solved is $\lfloor {k}/{2} \rfloor$.
This explain the behavior with respect to time: the Krylov space of
the same dimension for the extended Krylov costs roughly a half of
the rational subspace of the same size.
\begin{figure}[htbp]
    \centering
    \subfloat[$\alpha=1.2$]{
    \includegraphics[width=0.32\columnwidth]{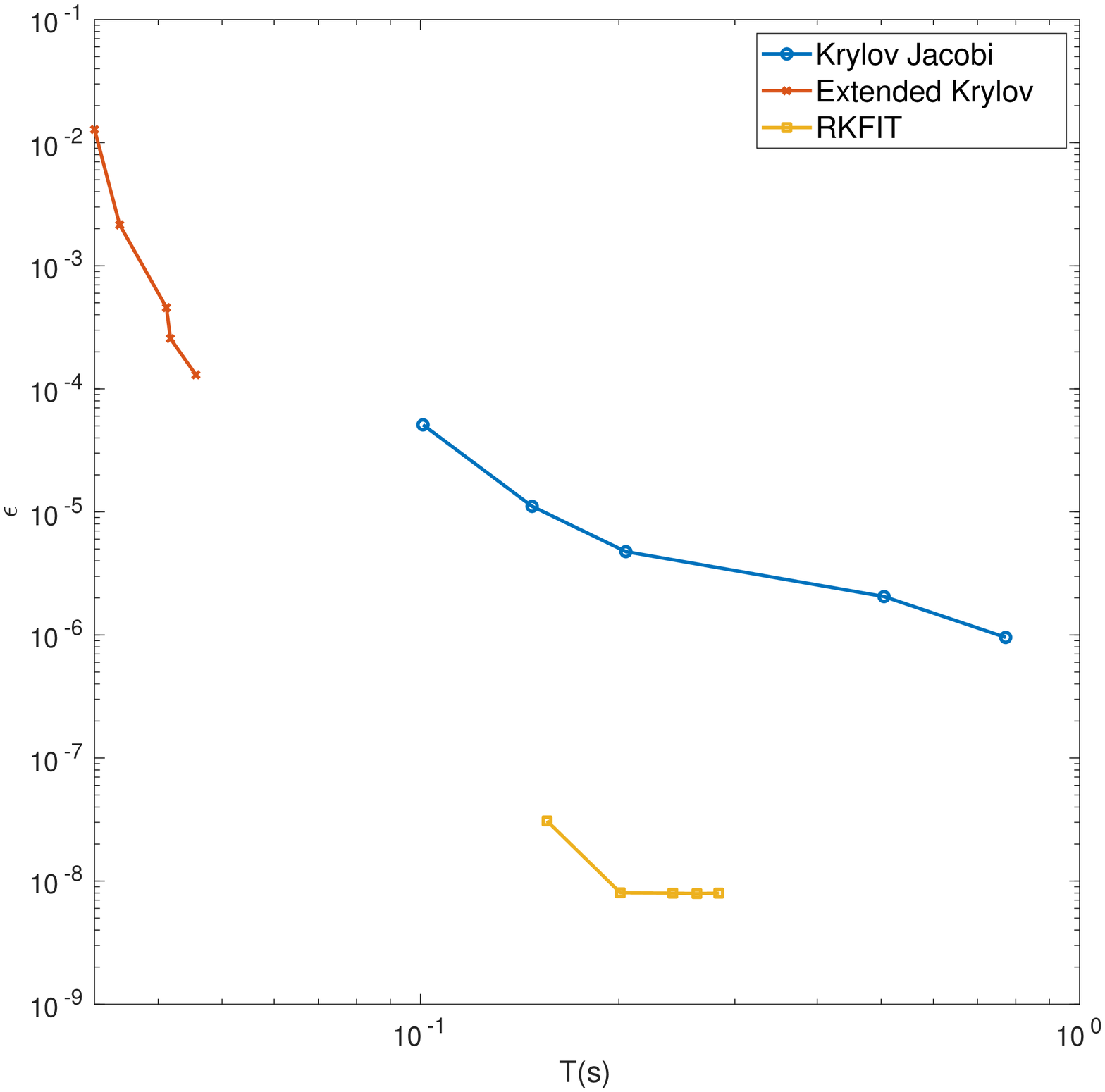}
    }
    \subfloat[$\alpha=1.5$]{
    \includegraphics[width=0.32\columnwidth]{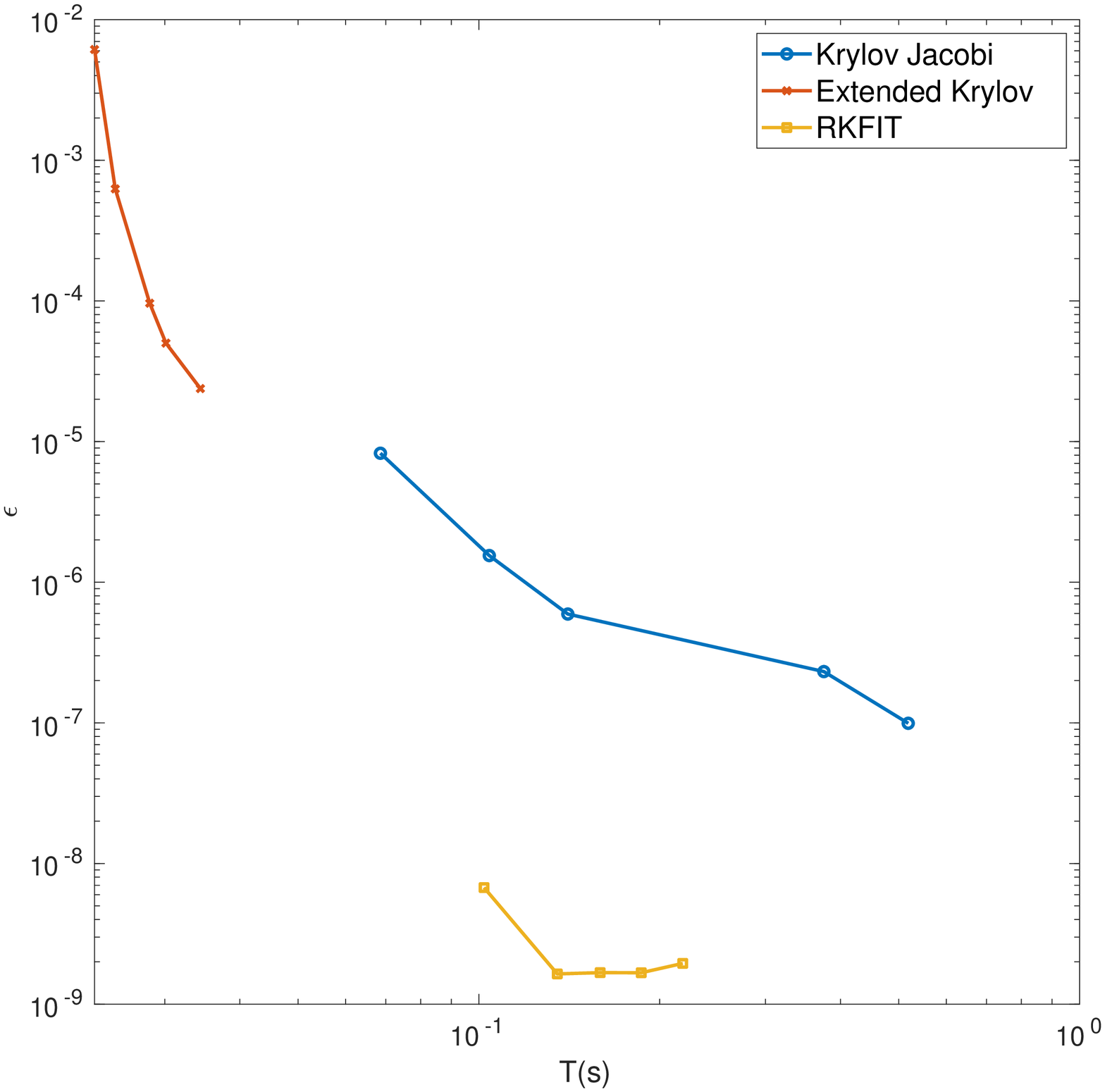}
    }
    \subfloat[$\alpha=1.8$]{
    \includegraphics[width=0.32\columnwidth]{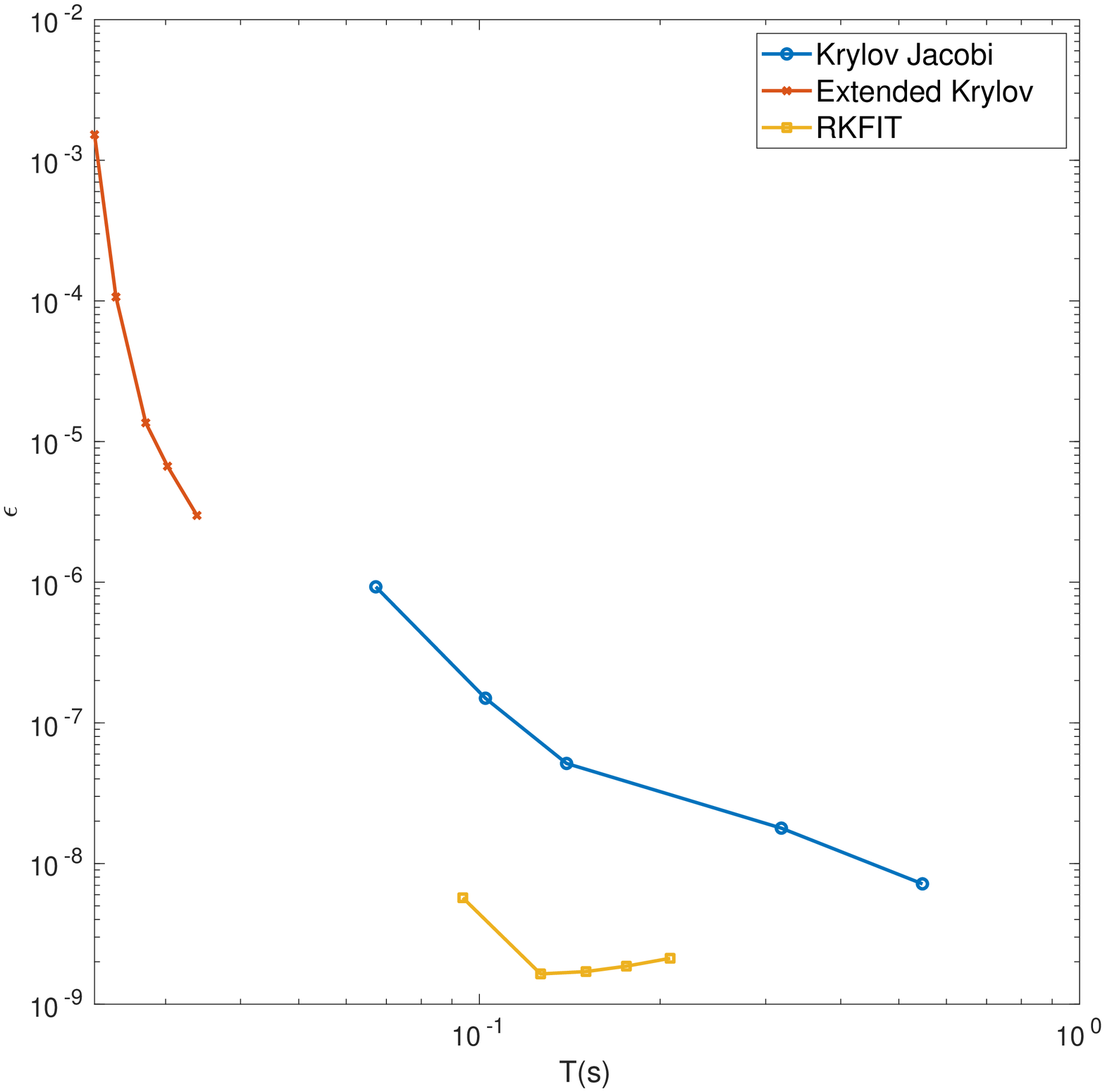}}
    \caption{P1-- FD: The nodes in the time--error graph corresponds to $k =  10,15,20,25,30$ poles.}\label{fig:f0_FD}
\end{figure}

\paragraph{P1 by using FV}
We consider the cell--centered FV discretization of the
problem~\eqref{eq:time_dependent} on the same grid used for
finite differences. In particular, $n_x = n_y = n_t = 2^6$.
Results in Figure~\ref{fig:f0_FV} are comparable with those of the
FD discretization. The RKFIT algorithm produces less than the
requested number of nodes and at a greater computational effort. By
fixing the size of the Krylov space, we observe that the
Krylov Jacobi method  is more accurate than the
extended Krylov method which, on the other hand, keeps the same
ratio with respect to the achieved timings.
\begin{figure}[htbp]
    \centering
    \subfloat[$\alpha=1.2$]{
        \includegraphics[width=0.32\columnwidth]{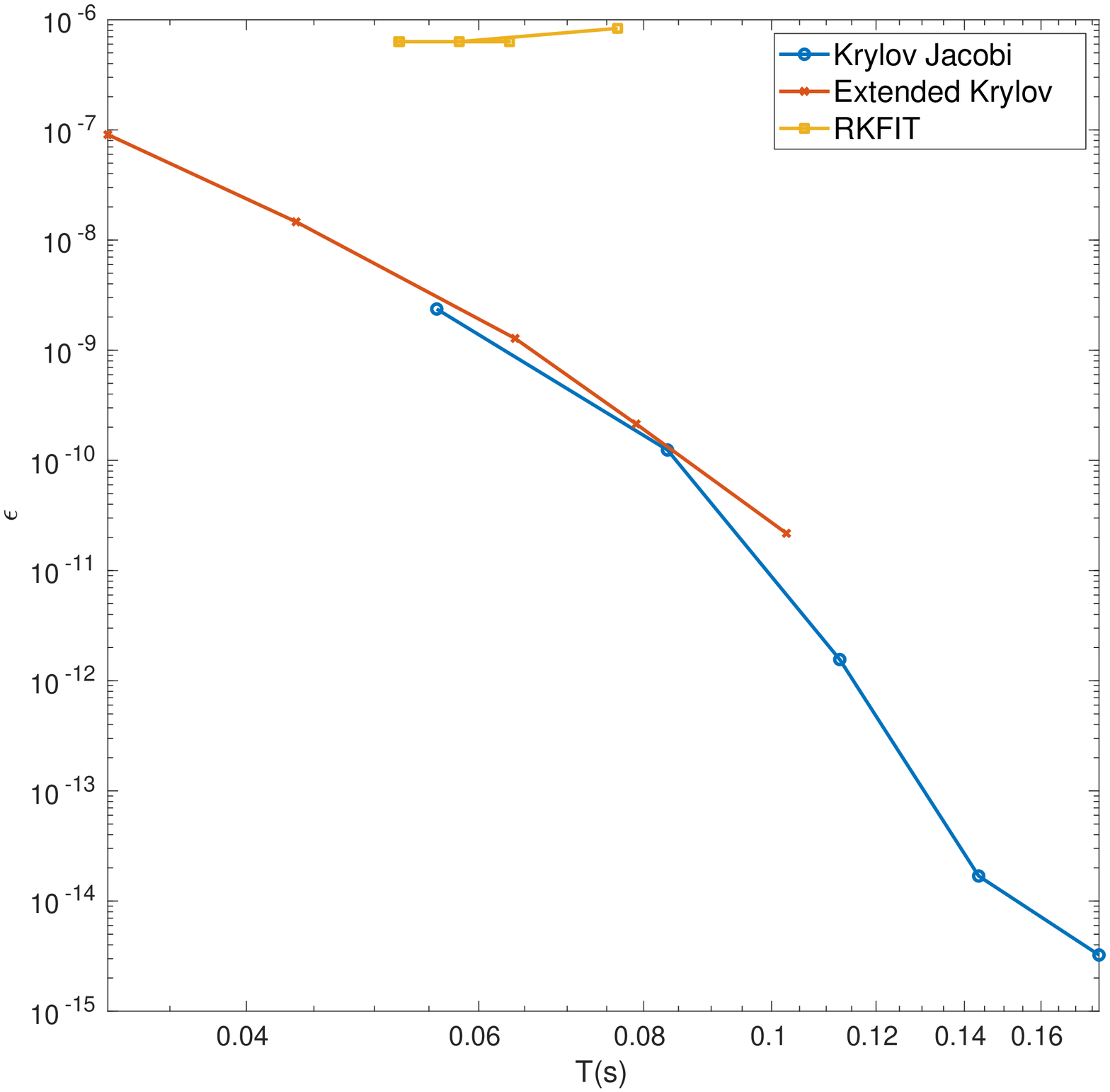}
    }
    \subfloat[$\alpha=1.5$]{
       \includegraphics[width=0.32\columnwidth]{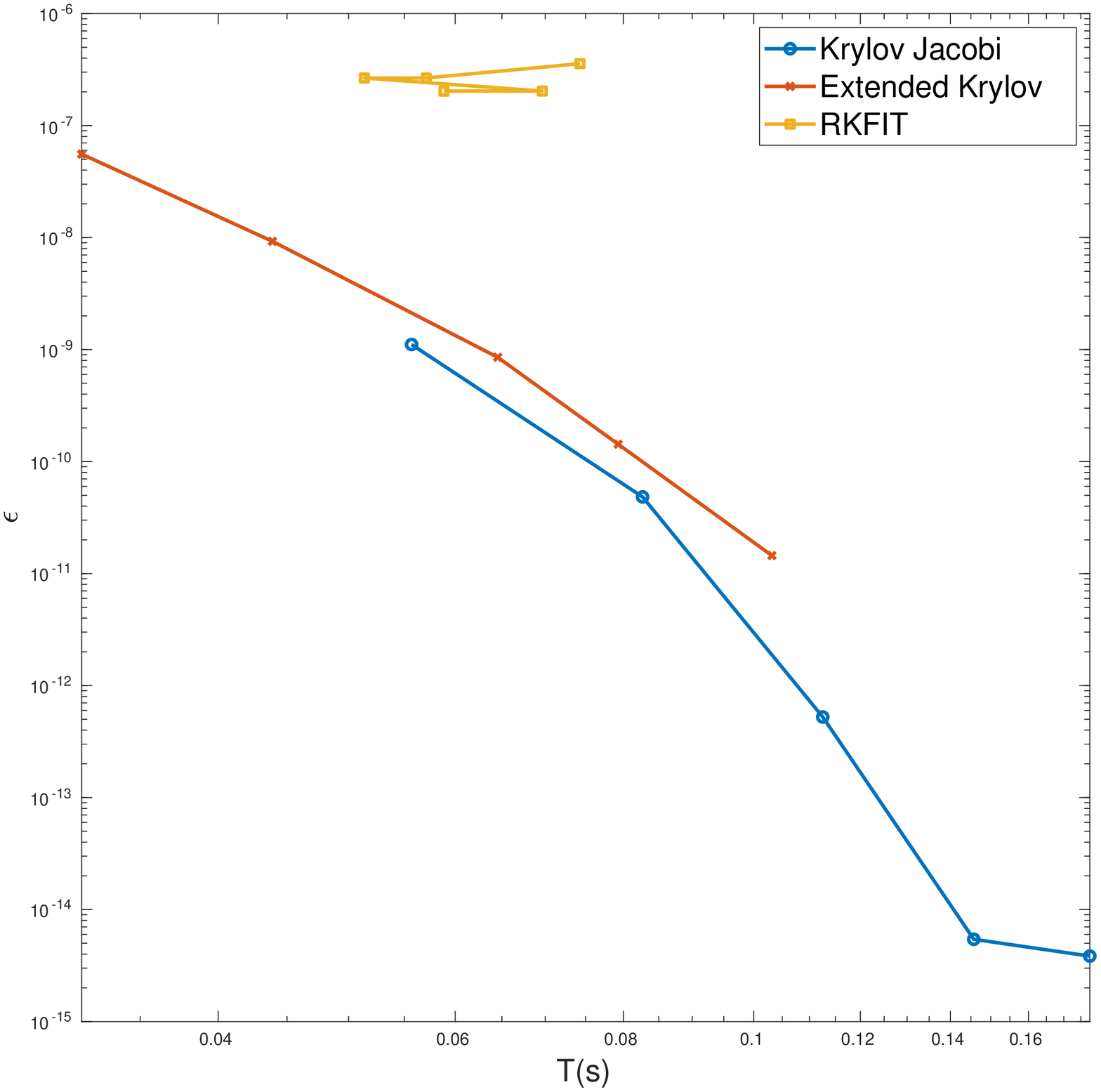}}
    \subfloat[$\alpha=1.8$]{
       \includegraphics[width=0.32\columnwidth]{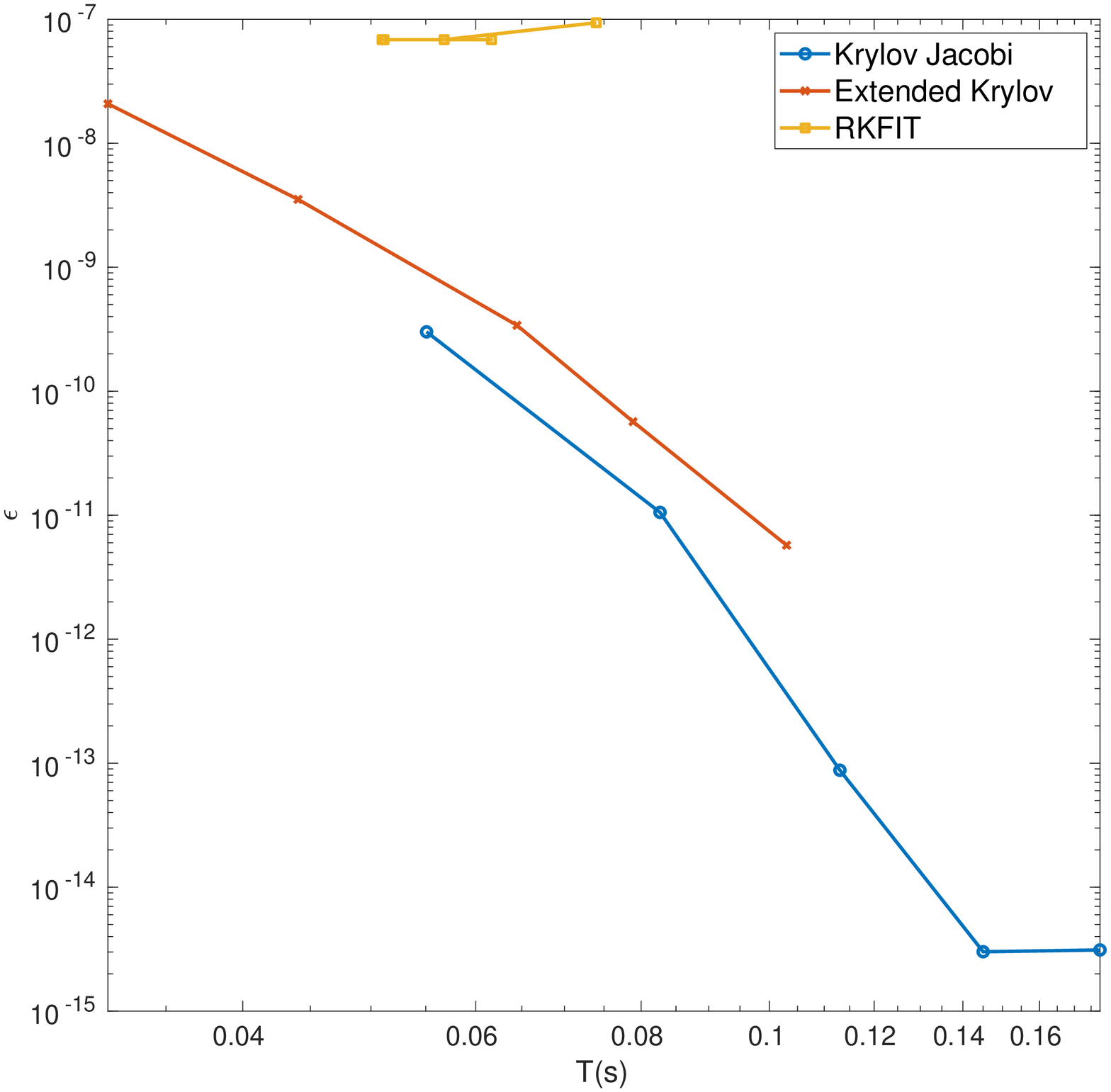}}
    \caption{P1-- FV: The nodes in the time--error graph corresponds to $k =  10,15,20,25,30$ poles.}\label{fig:f0_FV}
\end{figure}

\paragraph{P1 by using FEM}

Consider here the FEM discretization of
problem~\eqref{eq:time_dependent} on the structured mesh made by
triangular cells with straight sides, i.e., each side of the squared
domain $\Omega$ is divided into $n_x$ and $n_y$ rectangles, and then
divided into a pair of triangles, with Lagrangian elements of order
$1$. From Figure~\ref{fig:f0_FEM} we observe that the RKFIT
algorithm produces again less than the requested number of nodes and
at a greater computational effort. Considering Krylov spaces of the
same size, the Krylov Jacobi  method   is more
accurate than the extended Krylov. Moreover, the latter in this
example is only marginally better with respect to the execution
time.
\begin{figure}[htbp]
    \centering
    \subfloat[$\alpha=1.2$]{
       \includegraphics[width=0.32\columnwidth]{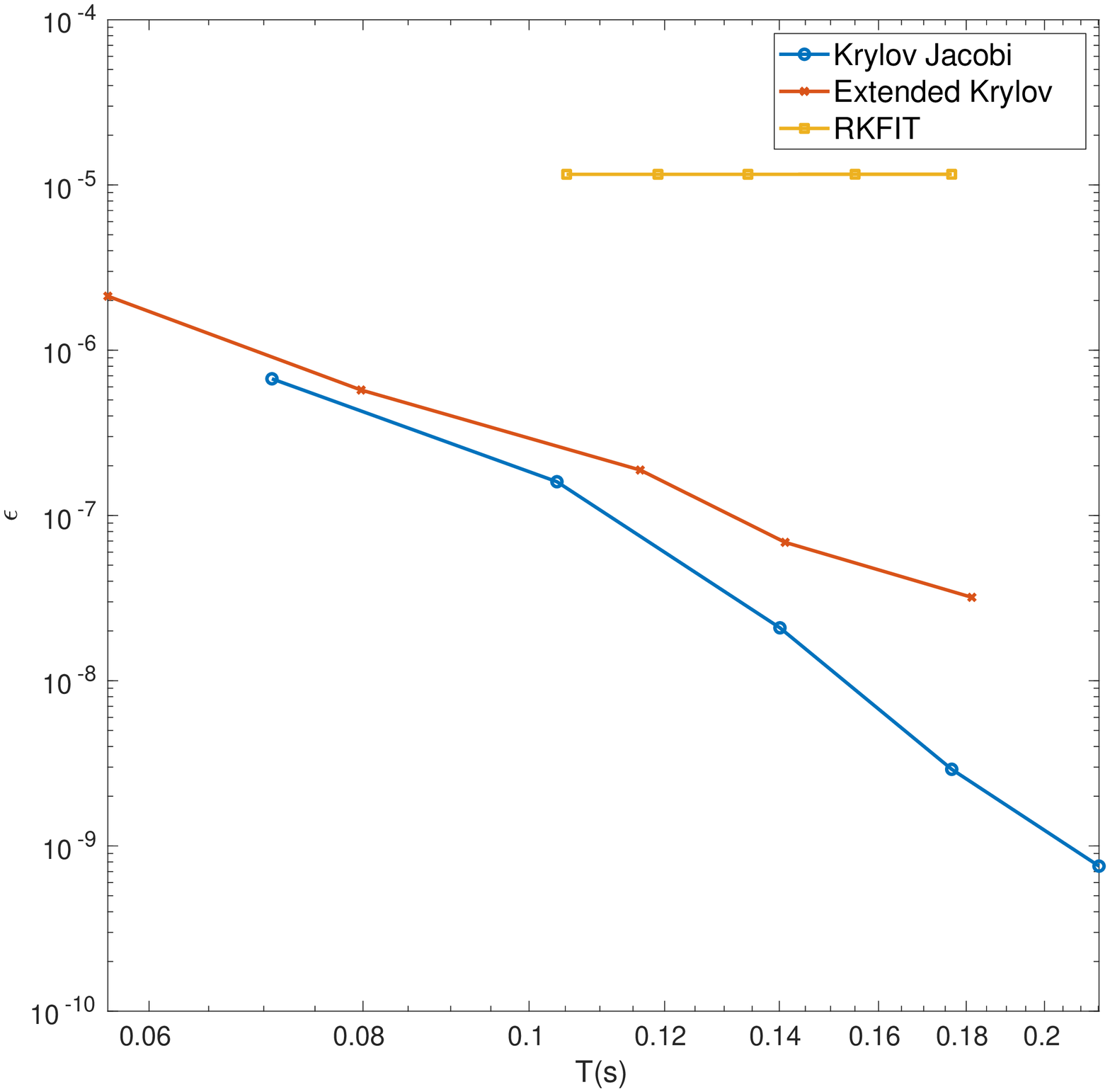}
    }
    \subfloat[$\alpha=1.5$]{
     \includegraphics[width=0.32\columnwidth]{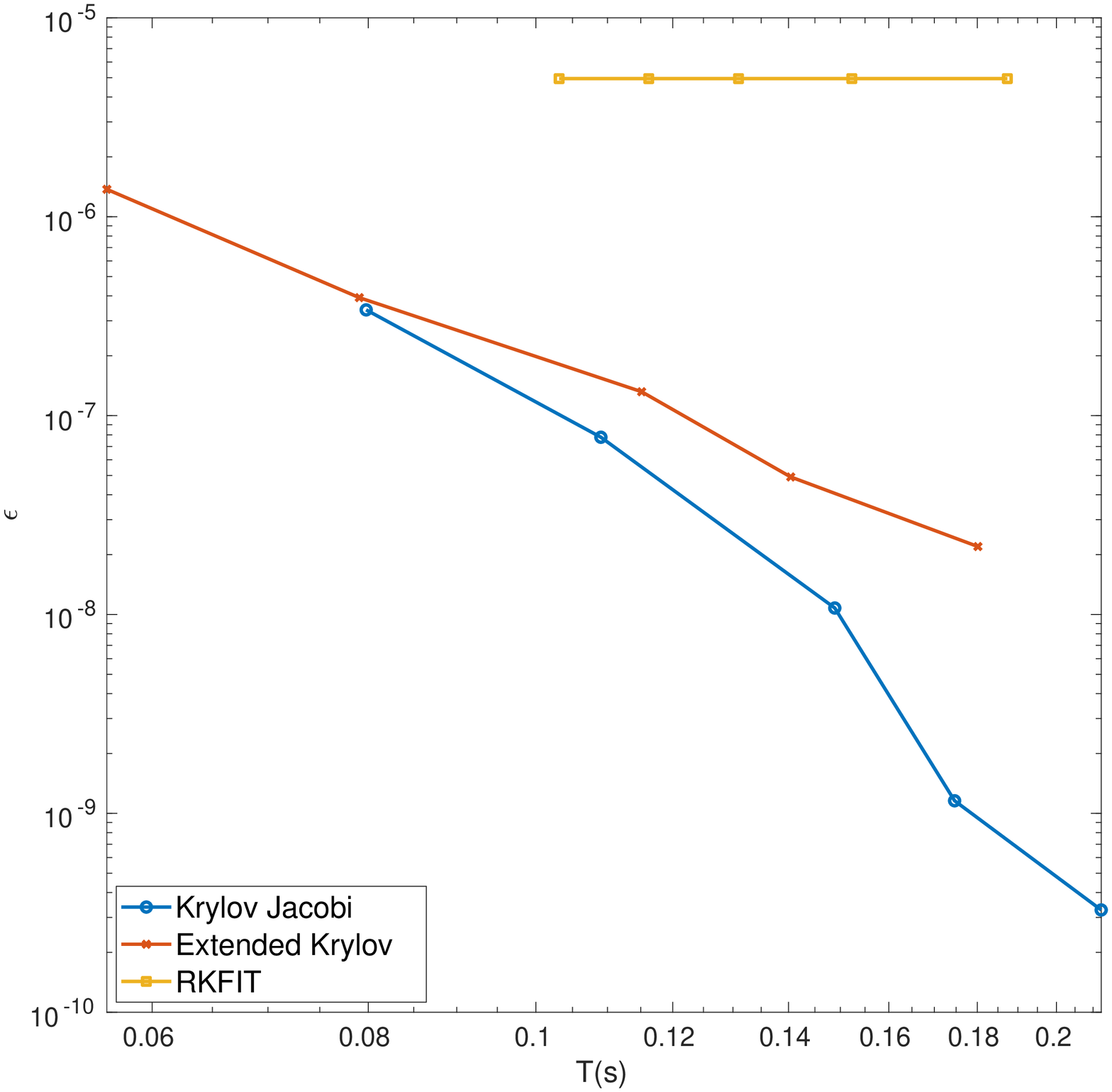}}
    \subfloat[$\alpha=1.8$]{
        \includegraphics[width=0.32\columnwidth]{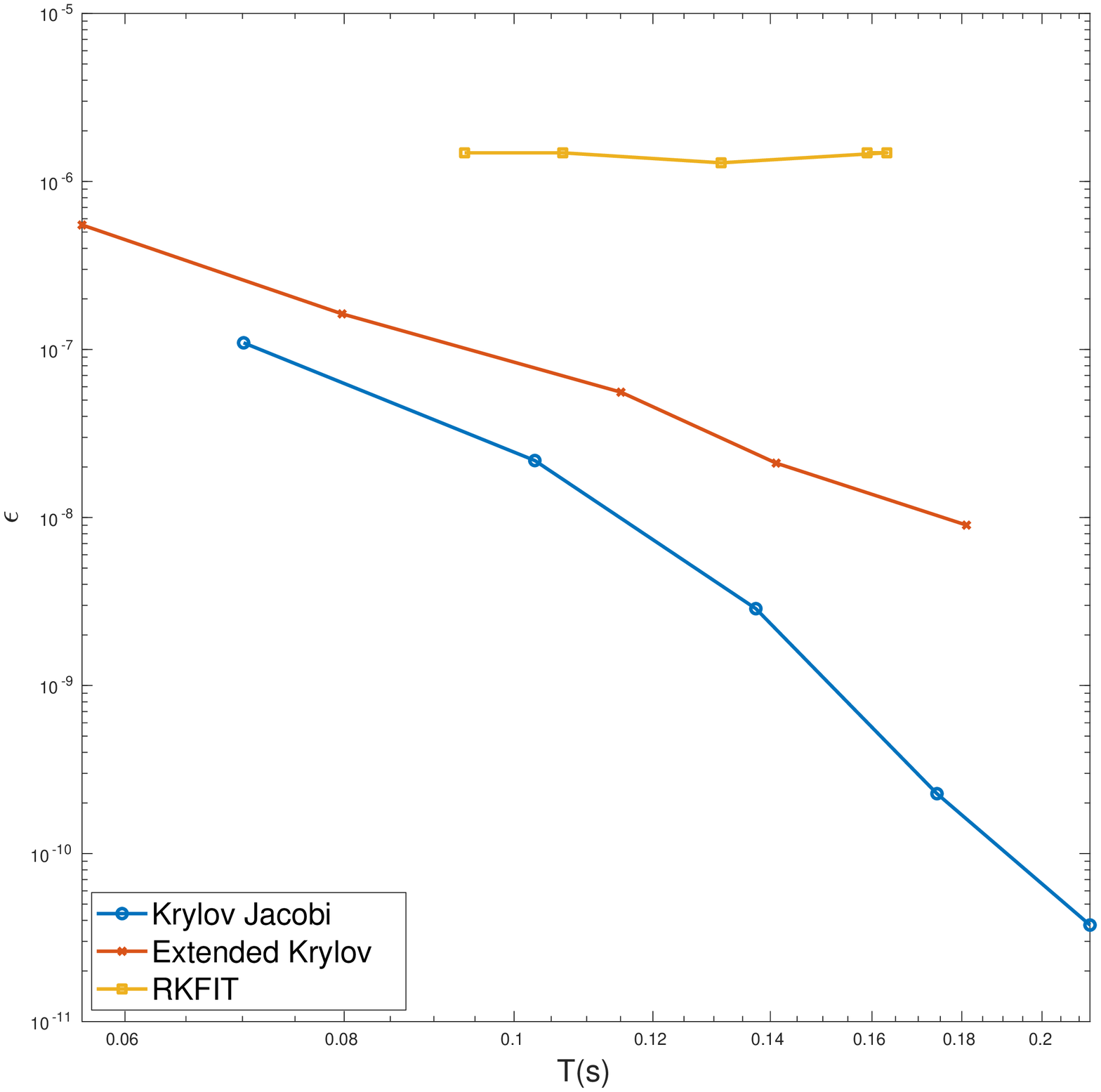}
    }
    \caption{P1-- FEM: The nodes in the time--error graph corresponds to $k =  10,15,20,25,30$ poles.}\label{fig:f0_FEM}
\end{figure}

\paragraph{P2 by using FV}
Consider the cell--centered FV for~\eqref{eq:allen-cahn}
 over a uniform quadrangular mesh with $n_x = n_y =
64$ intervals on the unit square $[0,1]^2.$
To solve this problem we consider the application of the IMEX
backward Euler method with $\delta_t = 10^{-2}.$
\begin{figure}[htbp]
    \centering
    \subfloat[$\alpha = 1.2$]{\includegraphics[width=0.33\columnwidth]{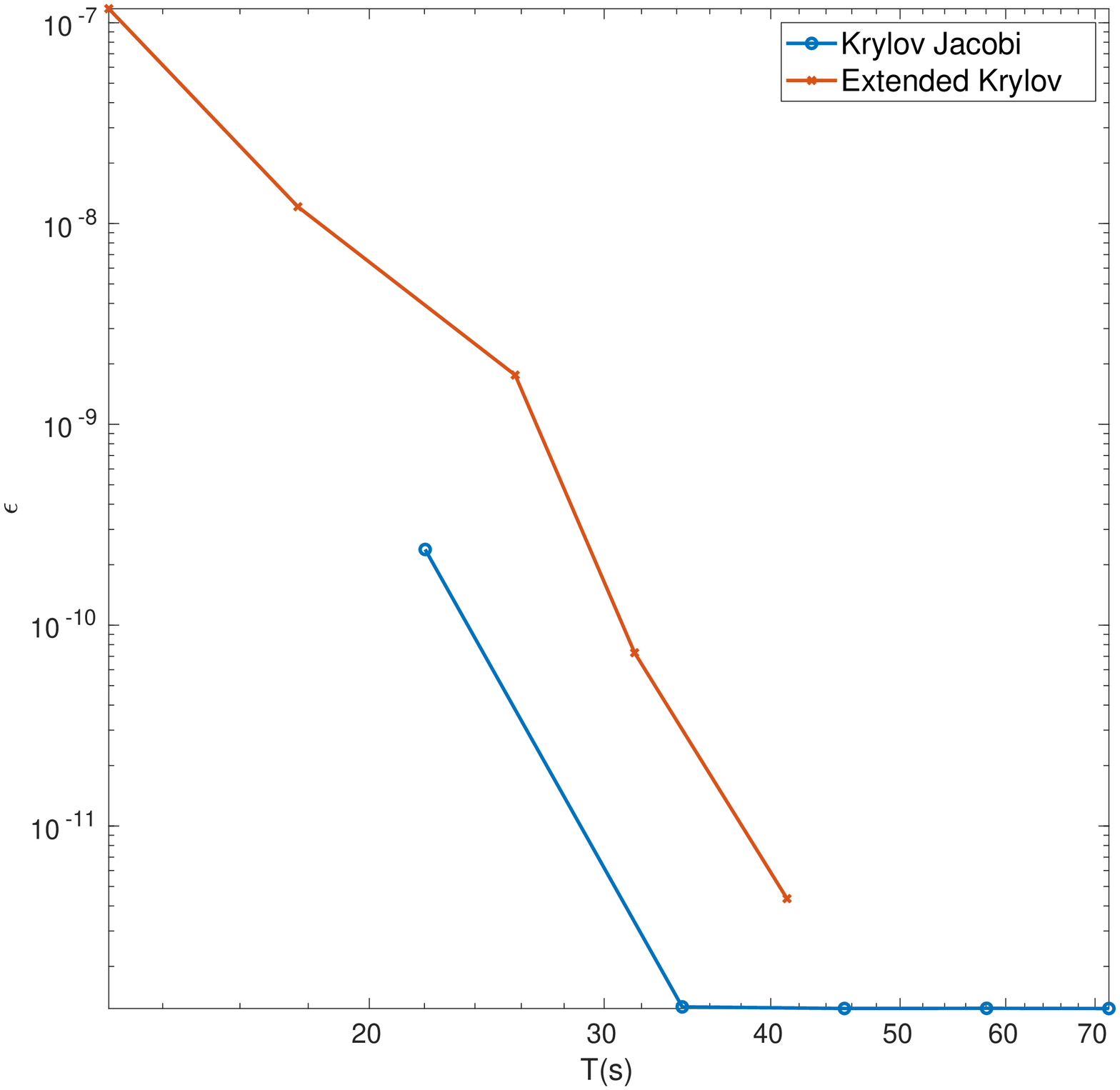}}
    \subfloat[$\alpha = 1.5$]{\includegraphics[width=0.33\columnwidth]{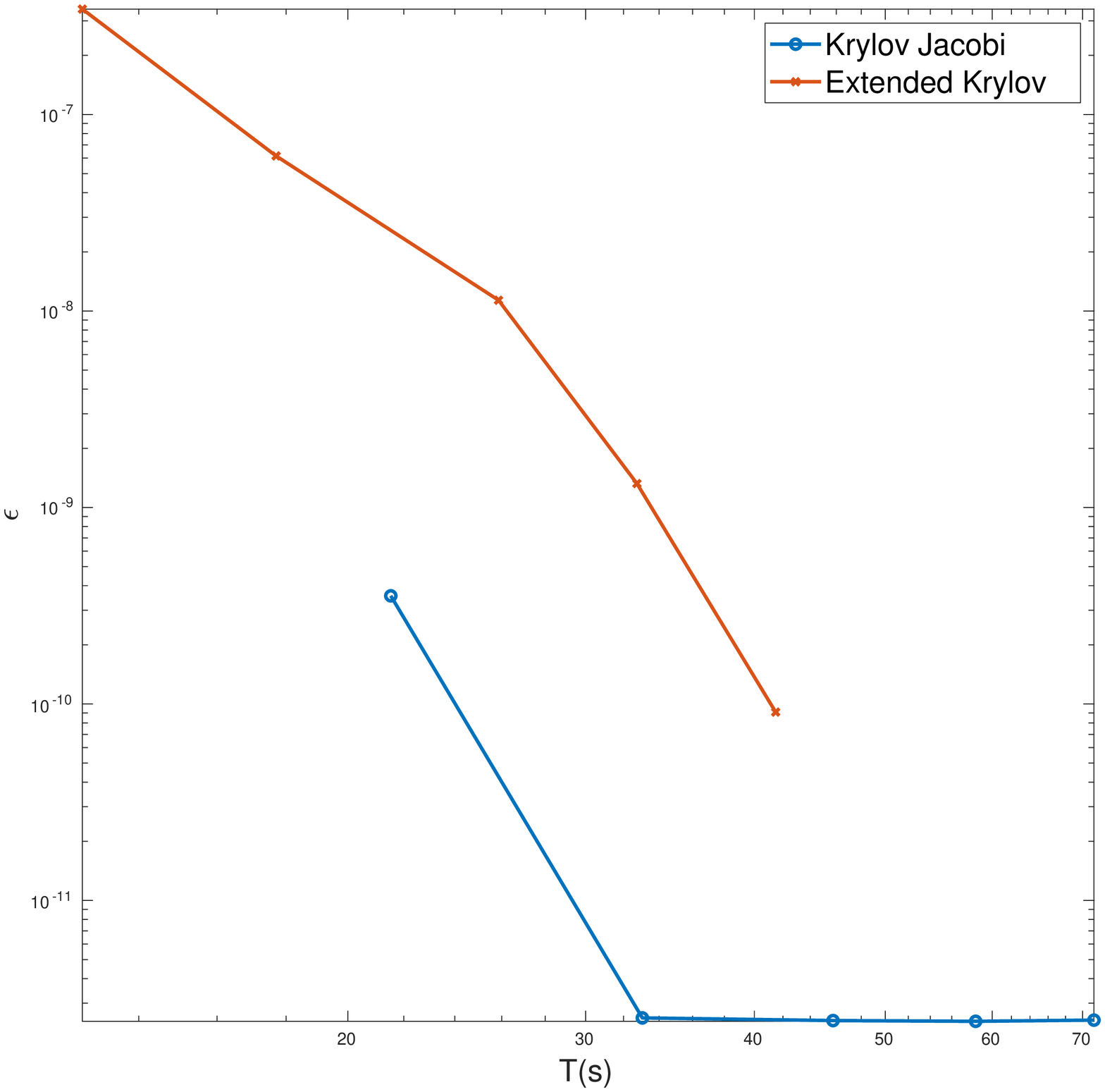}}
    \subfloat[$\alpha = 1.8$]{\includegraphics[width=0.33\columnwidth]{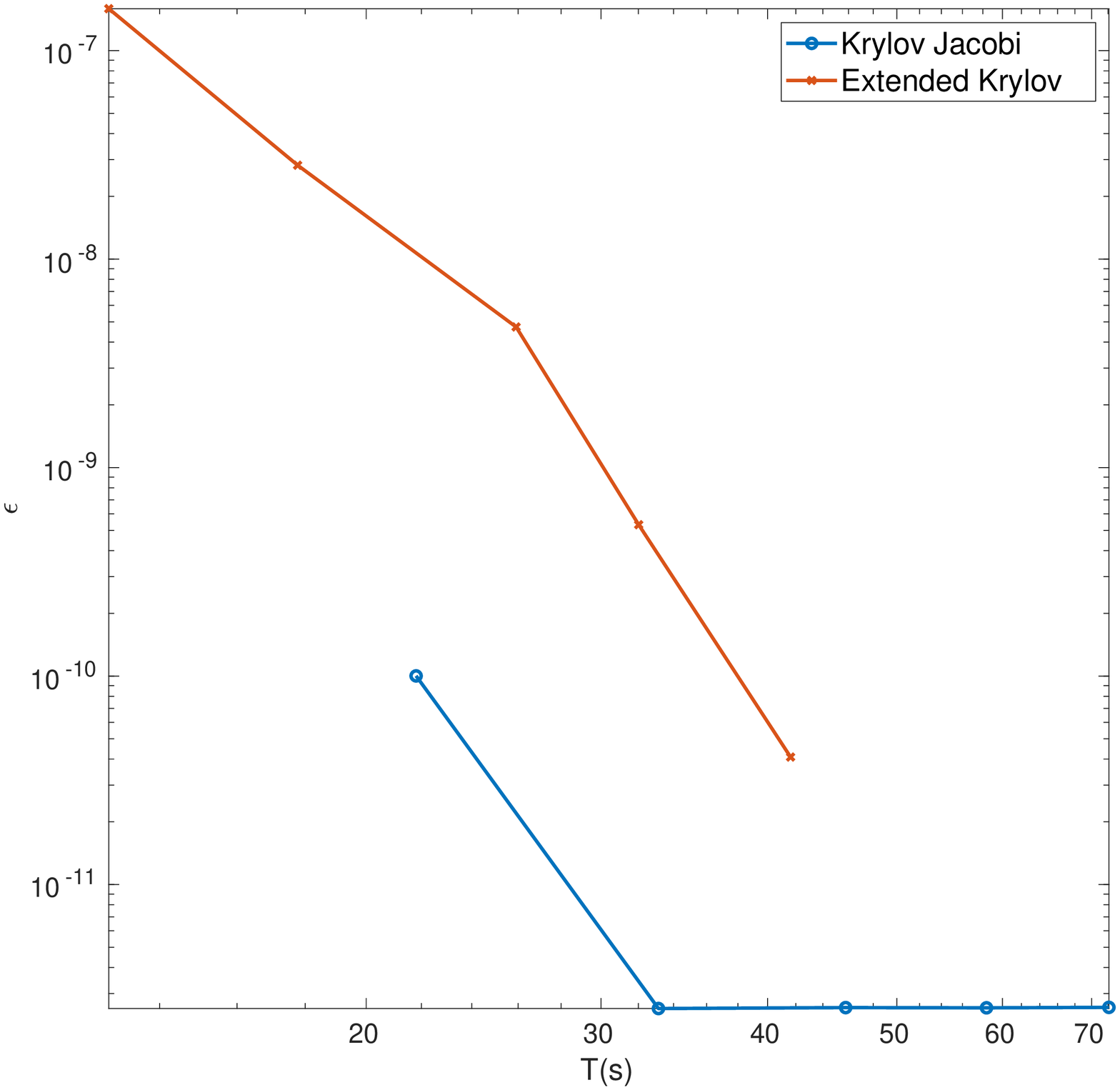}}
    \caption{P2-- FV:  Allen--Cahn equation~\eqref{eq:allen-cahn} with $\mu = 10^{-4}.$ The nodes in the time--error graph corresponds to $k =  10,15,20,25,30$ poles.}\label{fig:allen_cahn_fv}
\end{figure}
The results are
collected in Figure~\ref{fig:allen_cahn_fv}, from which we observe
that the  Krylov Jacobi method  achieves the
better accuracy already with $k = 5$ nodes. The RKFIT Algorithm,
tuned on the problem with $n_x = n_y = 32$, fails to compute
acceptable nodes in this case, see the example of the
errors for $\alpha = 1.8$ given in Figure~\ref{fig:allen_cahn_solution_fv}.
\begin{figure}[htbp]
    \centering
    \includegraphics[width=\columnwidth]{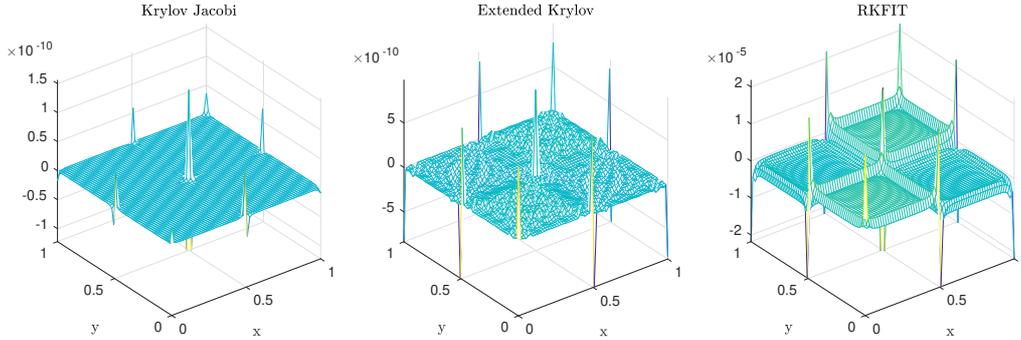}
    \caption{P2-- FV:  Comparison of the relative error for the Allen--Cahn equation~\eqref{eq:allen-cahn} with $\mu = 10^{-2}.$ }
\label{fig:allen_cahn_solution_fv}
\end{figure}

\paragraph{P2 by using FEM}
Consider the discretization of~\eqref{eq:allen-cahn} by means of
Lagrange linear elements over a uniform finite element mesh on the
unit square $[0,1]^2,$  i.e., a mesh consisting of triangular cells
with straight sides dividing each side of the square into $n_x$ and
$n_y$ rectangles, each divided into a pair of triangles, for $n_x =
n_y = 80.$ The integration method in time is the IMEX backward Euler
with $\delta_t = 10^{-2}.$ The performance of the considered
rational Krylov method are shown in Figure~\ref{fig:allen_cahn_fem}.
Note that the behavior is analogous to the one observed for the
time--dependent problem~\eqref{eq:time_dependent} (compare the
results with those in Figure~\ref{fig:f0_FEM}).
\begin{figure}[htbp]
    \centering
    \subfloat[$\alpha = 1.2$]{\includegraphics[width=0.33\columnwidth]{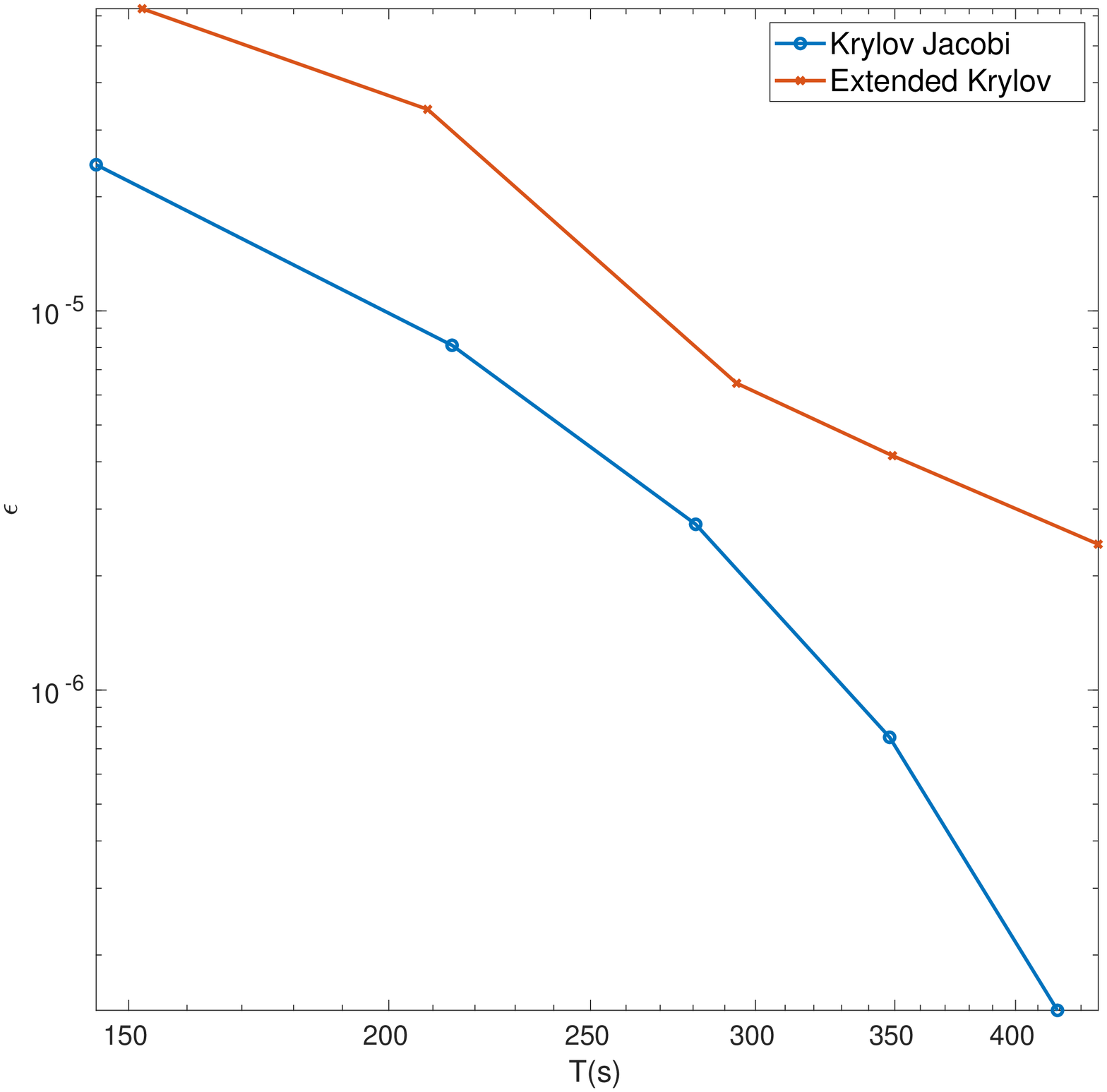}}
    \subfloat[$\alpha = 1.5$]{\includegraphics[width=0.33\columnwidth]{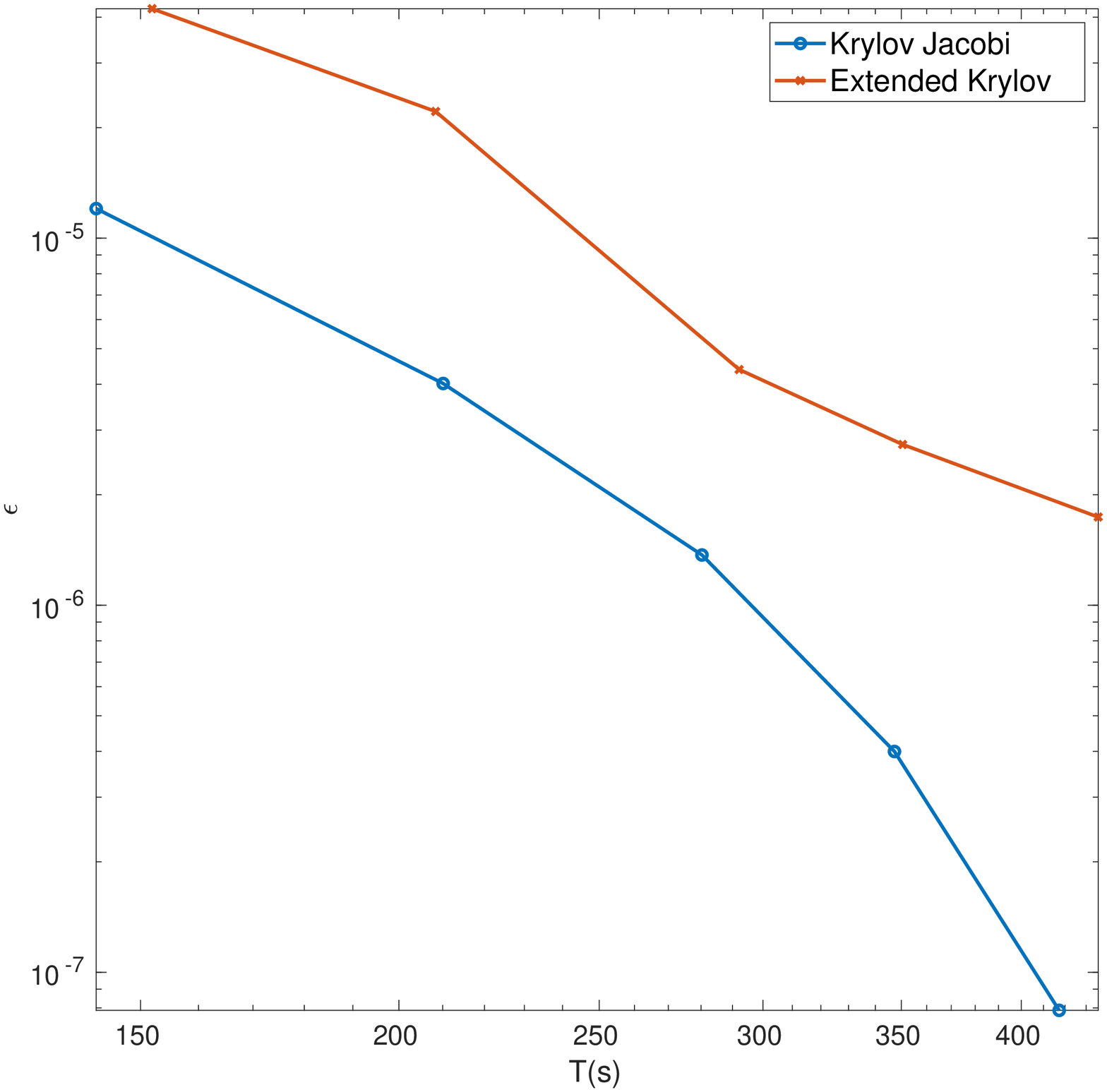}}
    \subfloat[$\alpha = 1.8$]{\includegraphics[width=0.33\columnwidth]{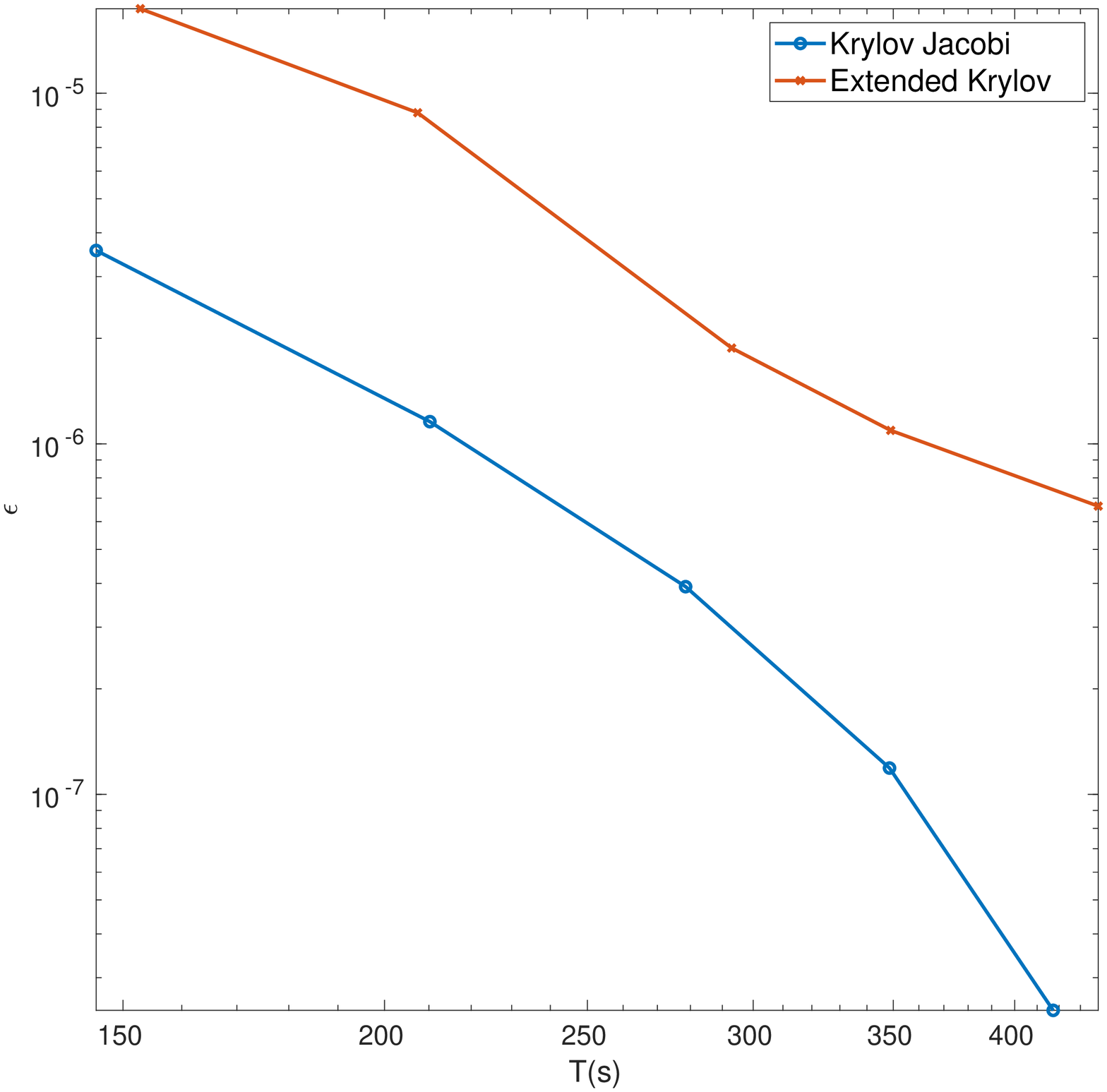}}
    \caption{P2-- FEM:  Allen--Cahn equation~\eqref{eq:allen-cahn} with $\mu = 10^{-3}.$ The nodes in the time--error graph corresponds to $k =  10,15,20,25,30$ poles.}
    \label{fig:allen_cahn_fem}
\end{figure}

\section{Conclusion and perspectives}

We considered certain matrix functions involving
the fractional power of real positive definite matrices computed by
rational Krylov methods with application to the numerical
approximation of fractional-in-space partial differential equations.
We proposed a new choice of the poles, real and simple by
construction, that appears to be effective and faster with respect
to polynomial, shift-and-invert, extended Krylov methods, and RKFIT.

Within the same framework, one can apply the proposed rational
Krylov Jacobi method also to \emph{fractional linear multistep
methods}~\cite{MR838249} for solving~\eqref{eq:time_dependent} 
in which the first order derivative in
time is replaced by a fractional derivative of order $\beta,$ with
$\beta \in (0,1).$

In principle, the proposed method can work on irregular domains, 
	Robin or Dirichlet BC, see, e.g., \cite{Gibou1, Gibou2, Gibou3}, and can 
	be used also in contexts of adaptivity, provided that these generate a symmetric 
	positive definite matrix. In the latter case, studying how the selected poles 
	vary as the mesh is changed would be of interest, and could also open 
	new alternative approaches.


\bibliographystyle{elsarticle-num}
\bibliography{fractionalrationalkrylov.bib}

\end{document}